\journal{Journal of Computational and Applied Mathematics}
\newcommand{\phase}{100}
\newcommand{\spr}{\,\bigl|\bigr.\,}
\newcommand{\sR}{\set{R}}
\newcommand{\sN}{\set{N}}
\newcommand{\sZ}{\set{Z}}
\DeclareMathOperator{\nnz}{nnz}
\newcommand{\pyramid}{\ensuremath{\triangle}}
\newcommand{\squar}{\ensuremath{\square}}
\newcommand{\mesh}{\mathcal{M}_\VN}
\newcommand{\FEg}{\mathcal{X}_{\VN}}
\newcommand{\FE}{\mathcal{X}_{\VN,p}}
\newcommand{\FEo}{\mathcal{X}_{\VN,p,0}}
\newcommand{\skipifemptyarg}[1]{\ifthenelse{\isempty{#1}}{}{\left[#1\right]}}
\newcommand{\skipifscalar}[1]{\ifthenelse{\isempty{#1}}{}{;#1}}
\newcommand{\set}[1]{\mathbb{#1}}
\newcommand{\B}[1]{\boldsymbol{#1}} 
\newcommand{\V}[1]{\B{#1}} 
\newcommand{\mat}[1]{\mathsf{#1}} 
\newcommand{\M}[1]{\mat{#1}}
\newcommand{\MB}[1]{\B{\M{#1}}}
\newcommand{\ol}[1]{\overline{#1}}
\newcommand{\iter}[1]{_{(#1)}}
\newcommand{\eff}{\mathrm{H}}
\newcommand{\x}{{\V{x}}}
\newcommand{\ZNd}{\set{Z}^d_{\VN}}
\newcommand{\ZtNd}{\set{Z}^d_{\tVN}}
\newcommand{\cE}{\mathscr{E}}
\newcommand{\cEN}{\cE_\VN}
\newcommand{\xEN}{\set{E}_\VN}
\newcommand{\xhVN}{{\set{C}^{d\times\VN}}}
\newcommand{\xhMN}{[\xhVN]^2}
\newcommand{\xVtN}{{\set{R}^{d\times\tVN}}}
\newcommand{\xMtN}{[\xVtN]^2}
\newcommand{\cT}{\mathscr{T}}
\newcommand{\cP}{\mathcal{P}}
\newcommand{\cTNd}{\cT_\VN^d}
\newcommand{\Hnp}{H^1_{\zmean}}
\newcommand{\zmean}{0}
\providecommand{\norm}[1]{\lVert#1\rVert}
\newcommand{\scal}[2]{\bigl(#1,#2\bigr)}
\newcommand{\mean}[1]{\langle #1 \rangle}
\newcommand{\bilf}[2]{a\ifthenelse{\isempty{#1}}{}{\bigl(#1,#2\bigr)}}
\newcommand{\Aeff}{A_{\eff}}
\newcommand{\tA}{\widetilde{A}}
\newcommand{\tAeff}{\tA_{\eff}}
\newcommand{\Aeffem}{A_{\eff,\VN,p}}
\newcommand{\Aeffth}{A_{\eff,\VN}}
\newcommand{\AeffN}{A_{\eff,\VN}}
\newcommand{\lam}{\ensuremath{\lambda}}
\newcommand{\del}{\ensuremath{\delta}}
\newcommand{\alp}{\ensuremath{\alpha}}
\newcommand{\mI}{\ensuremath{\B{I}}}
\newcommand{\Vk}{\ensuremath{{\V{k}}}}
\newcommand{\Vm}{\ensuremath{{\V{m}}}}
\newcommand{\Vl}{\ensuremath{{\V{l}}}}
\newcommand{\Vn}{\ensuremath{{\V{n}}}}
\newcommand{\MBA}{\ensuremath{\B{\M{A}}}}
\newcommand{\MBG}{\ensuremath{\B{\M{G}}}}
\newcommand{\MBF}{\ensuremath{\B{\M{F}}}}
\newcommand{\MBFi}{\ensuremath{\B{\M{F}}^{-1}}}
\newcommand{\MBe}{\ensuremath{\B{\M{e}}}}
\newcommand{\MBE}{\ensuremath{\B{\M{E}}}}
\newcommand{\MBv}{\ensuremath{\B{\M{v}}}}
\newcommand{\MBx}{\ensuremath{\B{\M{x}}}}
\newcommand{\mB}{\ensuremath{\B{B}}}
\newcommand{\Vo}{\ensuremath{{\V{0}}}}
\newcommand{\MBhG}{\ensuremath{{\B{\M{\hat{G}}}}}}
\newcommand{\VN}{\ensuremath{{\V{N}}}}
\newcommand{\tVN}{\ensuremath{{2\V{N}-1}}}
\newcommand{\pVN}{|\VN|_{\Pi}}
\newcommand{\Ve}{\ensuremath{\V{e}}}
\newcommand{\VE}{\ensuremath{\V{E}}}
\newcommand{\Vu}{\ensuremath{\V{u}}}
\newcommand{\Vv}{\ensuremath{\V{v}}}
\newcommand{\Vw}{\ensuremath{\V{w}}}
\newcommand{\puc}{\mathcal{Y}}
\newtheorem{theorem}{Theorem}[section]
\newtheorem{lemma}[theorem]{Lemma}
\newtheorem{corollary}[theorem]{Corollary}
\DeclareMathOperator{\argmin}{argmin}
\DeclareMathOperator{\D}{d\!}
\DeclareMathOperator{\curl}{curl}
\newcommand{\imu}{\mathrm{i}}		
\DeclareMathOperator{\rank}{rank}
\begin{document}

\begin{frontmatter}

\title{Energy-based comparison between the Fourier--Galerkin method and the finite element method}

\author[address_jv]{Jaroslav Vond\v{r}ejc}
\address[address_jv]{Technische Universität Braunschweig, Institute of Scientific Computing, Mühlenpfordtstra{\ss}e~23, D-38106 Braunschweig, Germany}
\ead{j.vondrejc@tu-bs.de; vondrejc@gmail.com}


\author[address_TdG]{Tom~W.J.~de Geus}
\address[address_TdG]{\'{E}cole Polytechnique F\'{e}d\'{e}rale de Lausanne (EPFL), Institute of Theoretical Physics, CH-1015 Lausanne, Switzerland}
\ead{tom@geus.me}



\begin{abstract}
The Fourier-Galerkin method (in short FFTH) has gained popularity in numerical homogenisation because it can treat problems with a huge number of degrees of freedom.
Because the method incorporates the fast Fourier transform (FFT) in the linear solver, it is believed to provide an improvement in computational and memory requirements compared to the conventional finite element method (FEM).
Here, we systematically compare these two methods using the energetic norm of local fields, which has the clear physical interpretation as being the error in the homogenised properties.
This enables the comparison of memory and computational requirements at the same level of approximation accuracy.
We show that the methods' effectiveness relies on the smoothness (regularity) of the solution and thus on the material coefficients.
Thanks to its approximation properties, FEM outperforms FFTH for problems with jumps in material coefficients, while ambivalent results are observed for the case that the material coefficients vary continuously in space.
FFTH profits from a good conditioning of the linear system, independent of the number of degrees of freedom, but generally needs more degrees of freedom to reach the same approximation accuracy.
More studies are needed for other FFT-based schemes, non-linear problems, and dual problems (which require special treatment in FEM but not in FFTH).
\end{abstract}

\begin{keyword}
energy-based comparison \sep finite element method \sep Fourier-Galerkin method \sep fast Fourier transform \sep numerical homogenisation \sep computational effectiveness
\end{keyword}

\end{frontmatter}


\section{Introduction}
A physical system (e.g.\ thermal or mechanical) with a complex microstructure is typically represented with (ensembles of) periodic microstructural cells 
(see examples by us \cite{DeGeus2016stat,DeGeus2015a,DeGeus2019fric,NeKrVo2013twoscale,NeKrVo2013CCC}).
Its mathematical modelling and concurrent numerical simulation, which usually involves solving one or more (coupled) partial differential equations, may be computationally very demanding.
This is especially true for problems including uncertainties, geometric and/or material nonlinearities, or multiple scales, because the microstructural problems have to be solved many times.
Here, we compare the numerical efficiency of the now very popular Fourier-Galerkin methods to the classical finite element method (FEM).
In particular, we test the popular believe that the former offer a significant higher efficiency than the latter, for a specific set of problems. 
Specifically, FEM uses a discretisation that is piece-wise continuous in space, allowing arbitrary discretisation morphologies as well as boundary conditions, while the Fourier-Galerkin method is based on a discretisation using globally supported trigonometric polynomials \cite{Boyd2000book,Canuto2006SM_book,SaVa2000PIaPDE,Cain1984,Cai1989,Luciano1994,VoZeMa2014FFTH}, allowing the use of the fast Fourier transform (FFT) typically for regular and periodic discretisations.
Our contribution focuses on the clarification of the numerical efficiency of Fourier-Galerkin methods with respect to its approximation accuracy.
We compare the two methods through the accuracy of local fields, which is based on errors of local fields measured using the energetic norm that also correspond to the errors in effective properties\footnote{Note that this is only valid for a conforming approximation, i.e.\ when compatible fields are used and the integrals in the Galerkin formulation are evaluated exactly, see \ref{sec:energetic-norm} for details.}. This gives the comparison a clear physical meaning.

The Fourier-Galerkin method is part of a class of methods that employs FFT in the context of homogenisation \cite{Boyd2000book,Canuto2006SM_book,SaVa2000PIaPDE,Moulinec1994FFT,VoZeMa2014FFTH}, as was first introduced by Moulinec and Suquet \cite{Moulinec1994FFT} who present an efficient algorithm that relies on a fixed-point iterative solution of the Lippmann-Schwinger (L-S) equation, thereby employing a Green operator. This approach was recognised as a collocation method \cite{ZeVoNoMa2010AFFTH} and later, thanks to a \emph{variational reformulation} of the L-S equation \cite{VoZeMa2012LNSC,VoZeMa2014FFTH}, as a \emph{Galerkin approximation with numerical integration} (denoted FFTH-GaNi) \cite{VoZeMa2014FFTH,VoZeMa2014GarBounds}. Note that this concerns merely a variational justification of Ref.~\cite{Moulinec1994FFT} and that both provide exactly the same discrete solution. Moreover, the variational reformulation has allowed a further development using standard mathematical tools such as convergence studies \cite{VoZeMa2014FFTH,Schneider2014convergence}, the usage of more efficient linear solvers \cite{MiVoZe2016jcp} and a straightforward treatment of non-linear small \cite{ZeGeVoPeGe2017} and finite \cite{GeVoZePeGe2017large} strain mechanics; rendering it a competitive alternative to FEM.

Here we will compare the FEM to FFTH-GaNi and also to the Fourier-Galerkin method based on \emph{exact integration} (denoted FFTH-Ga) because of its improved accuracy compared to the scheme with numerical integration.
The exact evaluation of the corresponding integrals (and also the energetic norms) in closed form \cite{VoZeMa2014GarBounds,Vondrejc2015FFTimproved} allows us to eliminate the numerical error of the quadrature, and thus to purely focus on discretisation and algebraic errors\footnote{By algebraic errors we  consider the error between the solution of the linear system and its approximation using e.g.\ conjugate gradients.} in the comparison.
Even though several papers \cite{Michel1999,Prakash2009,Liu2010comparison,Dunant2013a,Robert2015,Leclerc2015,Willot2015,Schneider2015stag,Anglin2014} compare FFT-based methods to FEM and other discretisation methods, the effectiveness of predicting local fields at the same level of accuracy was never assessed.
In particular, the accuracy of local fields was but cannot be deduced from the homogenised properties for nonconforming methods \cite{Michel1999,Prakash2009,Liu2010comparison,Dunant2013a,Robert2015,Leclerc2015}.
Consequently, a very good prediction of energies (homogenised properties) may be obtained with local fields that still possess a big error, because those approaches (e.g.~\cite{Moulinec1994FFT}) violate the variational structure of the problem, see \cite{Strang1972varcrime,VoZeMa2014GarBounds,Vondrejc2015FFTimproved}.
Those limitations were overcome by comparing various FFT-based schemes against analytical solutions \cite{Willot2015,Schneider2015stag,Anglin2014}, however, without focussing on the computational efficiency as is our focus.

We limit our comparison to FEM and the Fourier-Galerkin method as both numerical methods can be used in broad class of microscopic studies (e.g.\ \cite{DeGeus2016stat,DeGeus2015a,DeGeus2019fric,NeKrVo2013twoscale,NeKrVo2013CCC}).
These methods can additionally be used in multi-scale analyses, where the effectiveness of both methods in comparison to other approaches (e.g.~\cite{Hou2009,Kanoute2009,Hughes1996,Efendiev2013}) is left undiscussed.
Similarly we leave the effectiveness of other improvements and alternative approaches to FFT-based methods (such as the variational discretisation of the Lippmann-Schwinger equation with piece-wise constant basis functions \cite{Brisard2010FFT,Brisard2012FFT,Brisard2016}, the modification of the integral kernel \cite{Willot2013fourier,Willot2015}, schemes derived from finite differences \cite{Schneider2015stag} and hexahedral finite elements \cite{Schneider2016hexa}, or smoothing of the material coefficients \cite{Vondrejc2013PhD,Kabel2015}) undiscussed.
Although those methods are promising, they do not allow for the direct evaluation of guaranteed bounds on homogenised properties, which we have argued to be the crux of our comparison.

The remainder of this paper is structured as follows.
The problem is introduced in Section~\ref{sec:problem-description}.
Our results are presented in Section~\ref{sec:results}, followed by our conclusions and a short discussion in Section~\ref{sec:conclusion}. All mathematical details, including the numerical treatment, can be found in \ref{sec:continuous-formulation} through \ref{sec:approximate-homogenised-properties}. In terms of nomenclature, we denote $d$-dimensional vectors and matrices by boldface letters: $\V{a} = \left(a_i \right)_{i=1,2,\ldots,d}\in \sR^d$ or $\B{A} = (A_{ij})_{i,j=1,2,\ldots,d} \in \sR^{d\times d}$. Matrix-matrix and matrix-vector multiplications are denoted as $\B{C} = \B{A}\B{B}$ and $\V{c} = \B{A}\V{b}$, which in Einstein's summation notation reads $C_{ik} = A_{ij}B_{jk}$ and $b_{i} = A_{ij} b_j$ respectively.
The Euclidean inner product will be referred to as $c = \V{a} \cdot \V{b} = a_i b_i$, and the induced norm as $\| \V{a} \| = \sqrt{\V{a}\cdot\V{a}}$. Vectors and matrices such as $\MB{x}$, $\MB{b}$, and $\MBA$ arising from discretisation are denoted by the bold serif font in order to highlight their special structure. The space of $\puc$-periodic continuous functions defined on a periodic cell $\puc=(-\frac{1}{2},\frac{1}{2})^d$ is denoted as $C(\puc)$ and the space of square integrable $\puc$-periodic functions as $L^2(\puc)$. The analogical space $L^2(\puc;\sR^d)$ collects $\sR^d$-valued functions $\Vv:\puc\rightarrow\sR^d$ with components $v_i$ from $L^2(\puc)$. Finally, $\Hnp(\puc)=\{v\in L^2(\puc) \spr \nabla v\in L^2(\puc;\sR^d), \int_{\puc} v(\x) \D{\x} = 0\}$ denotes the Sobolev space of periodic functions with zero mean.

\section{Problem description}
\label{sec:problem-description}

\subsection{Model problem}
\label{sec:problem-description-model}

A model problem in homogenisation \cite{Bensoussan1978book} consists of a scalar linear elliptic variational problem defined on a unit domain $\puc=(-\frac{1}{2},\frac{1}{2})^d$ in $d$-dimensional setting, for which we consider both $d=2$ and $d=3$.
Before proceeding, we introduce a compact notation of a bilinear form $a:L^2(\puc;\sR^d)\times L^2(\puc;\sR^d)\rightarrow\sR$ defined as
\begin{align*}
\bilf{\Vv}{\Vw} :=
\int_\puc \B{A}(\x) \Vv(\x) \cdot \Vw(\x) \D{\x} = \int_\puc A_{ij}(\x) v_j(\x) w_i(\x) \D{\x},
\end{align*}
where $\B{A}:\puc\rightarrow\sR^{d\times d}$ is a symmetric and uniformly elliptic second order tensor of material coefficients.

We now consider the homogenisation problem to find effective material properties $\V{A}_\eff\in\sR^{d\times d}$. It is based on the minimisation of a microscopic energetic functional for constant vectors $\VE\in\sR^d$ as
\begin{equation}
\label{eq:homog}
\V{A}_\eff \VE \cdot \VE
=
\min_{v\in\Hnp(\puc)} \bilf{\VE+\nabla v}{\VE+\nabla v}
\end{equation}
where $\nabla v$ is the periodically fluctuating microscopic field of $\VE$.
In the sequel we consider exclusively $\VE = (\delta_{1,i})_{i=1}^d\in\sR^d$ (i.e.\ in 3D $\VE = [1,0,0]$); therefore, the $11$-component of the homogenised properties
\begin{align*}
\V{A}_\eff \VE \cdot \VE = \V{A}_{\eff,11} =: \Aeff
\end{align*}
will be of particular interest.

Both discretisation methods, FEM and FFTH (described in more detail in \ref{sec:finite-element-method} and \ref{sec:fourier-galerkin-method}), are conforming methods approximating the space $\Hnp(\puc)$ with some finite-dimensional subspace $\FEg\subset\Hnp(\puc)$, where $\VN$ is a discretisation parameter.
In the case of FFTH, $\FEg$ is spanned by the basis $\{\psi_i\}_{i=1}^n$ that consists of trigonometric polynomials, while in the case of FEM the basis consists of continuous piece-wise polynomials.
The discretised problem is then defined using the Ritz-Galerkin method
\begin{align}
\label{eq:homog_approx}
\AeffN
&= \min_{v_\VN\in X_\VN}\bilf{\VE+\nabla v_\VN}{\VE+\nabla v_\VN}=\bilf{\VE+\nabla u_\VN}{\VE+\nabla u_\VN}.
\end{align}
The minimiser
$u_\VN=\sum_{i=1}^n \M{u}_{i} \psi_i$ is described with the optimality condition (also known as the weak formulation)
\begin{align*}
\bilf{\nabla u_\VN}{\nabla v_\VN} = -\bilf{\VE}{\nabla v_\VN}
\quad\forall\, v_\VN \in \FEg.
\end{align*}
The coefficients $\MB{u} = (\M{u}_{i})_{i=1}^n$ with respect to the basis $\{\psi_i\}_{i=1}^n$ are obtained from the linear system
\begin{equation}
\label{eq:linsys_FEM_body}
\MBA\MB{u} = \MB{b},
\quad
\MBA_{ij}  = \bilf{\nabla\psi_j}{\nabla\psi_i},
\quad
\MB{b}_{i} = -\bilf{\VE}{\nabla\psi_i}
\quad
\text{for }i,j=1,2,\dotsc,n.
\end{equation}

Because of computational effectiveness, FFTH does not solve the problem for the state $u_\VN:\puc\rightarrow\sR$ but for its gradient $\Ve_\VN = \nabla u_\VN:\puc\rightarrow\sR^d$.
Since the gradient fields do not span the whole space of vector-valued functions, the compatibility condition has to be additionally enforced.
It is provided here with the discrete projection operator $\MBG$ (evaluated using FFT) along with some suitable linear solver.
For the latter we employ the conjugate gradient (CG) method, but for the purpose of ensuring compatibility other methods such as Richardson iteration or Chebyshev's method would be suitable too.

For FFTH, we will consider two discretisation schemes.
The first scheme, denoted as FFTH-Ga (see \ref{sec:FFTHga} for details), is based on exact integration of \eqref{eq:homog_approx} and leads to the following linear system
\begin{equation*}
\MBG \MBA \MBe = \MB{b} \qquad\text{with } \MB{b} =-\MBG \MBA \MBE.
\end{equation*}
The second scheme, denoted as FFTH-GaNi (see \ref{sec:FFTHgani} for details), is fully equivalent to the original Moulinec-Suquet scheme \cite{Moulinec1994FFT} (in fact, the solution vectors coincide \cite{VoZeMa2012LNSC}).
It is based on numerical integration of Eq.\ \eqref{eq:homog_approx} and leads to a similar linear system, however, with a different matrix $\widetilde{\MB{A}}$.
Both $\MBA$ and $\widetilde{\MBA}$ are block diagonal matrices.
Because only this `diagonal' is stored, no special sparse matrix storage is needed.
The difference between the two is that $\MBA$ is expressed on a double discretisation grid while $\widetilde{\MBA}$ necessitates only a single grid.
Consequently, the higher accuracy of FFTH-Ga comes at the costs of higher computational and memory requirements compared to FFTH-GaNi.

\subsection{Discretisation error}
\label{sec:discretisation-error}

For the problem considered here, the \emph{discretisation error} between the exact $\nabla u=\Ve$ and the approximate solution $\nabla u_\VN = \Ve_\VN$, respectively obtained from Eqs.~\eqref{eq:homog} and \eqref{eq:homog_approx}, is measured
in terms of the energetic norm, defined as
\begin{equation*}
\|\Ve\|_{\B{A}} = \sqrt{\bilf{\Ve}{\Ve}}.
\end{equation*}
It has a clear physical meaning in the case of conforming methods: the square of discretisation error is equal to the error in the homogenised properties
\begin{equation}
\label{eq:discretisation_error}
\|\Ve-\Ve_\VN\|_{\B{A}}^2 = \AeffN - \Aeff,
\end{equation}
see \ref{sec:energetic-norm} for a derivation.
In a similar way, it is possible to compute an algebraic error as an error between the exact solution of a linear system and approximate solution of the linear system, i.e. algebraic energetic error during the iterations of the conjugate gradients.

This approach is motivated by the guaranteed bounds approach presented e.g.\ in \cite{Willis1989,Dvorak1993master,Haslinger1995optimum} and later elaborated for FFTH in \cite{VoZeMa2014GarBounds,Vondrejc2015FFTimproved}.
We note that FFTH-GaNi does not provide the bound directly but can be evaluated in the post-processing stage.
For the numerical evaluation of Eq.~\eqref{eq:discretisation_error}, the homogenised properties $\Aeff$ have been estimated according to \ref{sec:approximate-homogenised-properties};
we emphasise that this estimation has no impact on the comparison because the value $\Aeff$ or its approximation is only a constant which influences only the absolute value of the error.
Note that the values are chosen sufficiently accurate to catch the trends in the figures.
We also note that the inequality $\Aeff\leq\AeffN$ between homogenised properties and its approximation is valid for all conforming methods based on the Galerkin approximation with exact integration; a numerical integration can cause violation of the inequality due to the so-called variational crime \cite{VoZeMa2014GarBounds,Vondrejc2015FFTimproved,Haslinger1995optimum}.

Generally, the convergence of the approximate solutions
\begin{align*}
\|\Ve-\Ve_\VN\|_A \rightarrow 0
\quad\text{for }\min_{\alp}N_\alp \rightarrow\infty
\end{align*}
can be arbitrarily slow when we increase the discretisation grid \cite{Ciarlet1991HNAFEM}.
When the solution has a higher regularity, meaning that a solution $\nabla \Vu=\Ve$  is in e.g. $\Ve\in H^s(\puc)$ for some positive $s$, the Fourier-Galerkin solutions converge with a rate $s$
\begin{align*}
\sqrt{\Aeffth-\Aeff}=\|\Ve-\Ve_\VN\|_A \leq  C N^{-s} \|\Ve\|_{H^s(\puc)},
\end{align*}
for details see e.g.\ \cite{VoZeMa2014FFTH,Schneider2014convergence,Vondrejc2013PhD}.

For FEM, the convergence rate, furthermore, depends on the order, $p$, of the polynomial approximation
\begin{align*}
\sqrt{\Aeffem-\Aeff} = \|\nabla \Vu-\nabla\Vu_{\VN,p}\|_A \leq  \tilde{C} N^{-r} \|\Ve\|_{H^s(\puc)}
\quad\text{for }
r=\min\{s,p\},
\end{align*}
which requires to use higher order polynomials to obtain higher convergence rate \cite{Ciarlet1991HNAFEM}. Note that the constants $C$ and $\tilde{C}$ are independent of the discretisation parameter $N$.

\subsection{Linear solvers}
\label{sec:linear-solvers}

The linear system that arises from FEM discretisation (Eq.~\eqref{eq:linsys_FEM_body}) can be solved using a direct or an iterative solver.
The direct solver is based either on LU decomposition or on Cholesky decomposition, which is a variant suitable for symmetric matrices.
In the latter, one decomposes the matrix $\MBA$ to
\begin{equation*}
\MBA = \MB{L} \MB{L}^T
\end{equation*}
whereby $\MB{L}$ is a sparse lower triangular matrix. Using this decomposition the solution of Eq.\ \eqref{eq:linsys_FEM_body} can be trivially found in two steps.
The actual decomposition is performed by the fill-reducing variant of the Cholesky decomposition\footnote{For instance available at: \href{https://github.com/scikit-sparse}{https://github.com/scikit-sparse}.}. It reduces the memory requirements of the factors, which are, like $\MBA$, stored in the CSR (compressed sparse row) format.

The advantage of these direct solvers is that the computationally consuming decomposition is independent of the right-hand side.
In the case of linear problems, like in this paper, it therefore has to be done only once.
The drawback of the direct solvers lies in significantly higher memory requirements for the factors than for the original system.
This can be partly overcome by the so-called pivoting
\begin{align*}
\MBA = \MB{P}\MB{L}\MB{L}^T\MB{P}^T
\end{align*}
in which the sparsity of the decomposition is increased by using a permutation matrix $\MB{P}$; this approach is incorporated in this paper.
It is remarked that a direct solver is unsuitable for FFTH because the system is never assembled.

The memory drawbacks of direct solvers are overcome by iterative solvers, which are suitable for both FFTH and FEM. Here we use the conjugate gradient (CG) method because of its effectiveness to minimise the energetic error.
Other methods such as Chebyshev's method could also be suitable especially for parallel computing, see \cite{MiVoZe2016jcp} for a comparison of linear solvers for FFTH.
Based on the linear system in Eq.~\eqref{eq:linsys_FEM_body}, the idea behind the CG method is that the solution $\MB{u}$ is approximated as a linear combination
\begin{equation*}
\MB{u} \approx \MB{u}\iter{k} = \sum\limits_{i=1}^k \alpha_{(i)} \MB{p}_{(i)}
\end{equation*}
with $\MBA$-orthogonal basis vectors $\MB{p}\iter{i}$.
The crux of the method is that the factors $\alpha_{(k)}$ and the bases $\MB{p}_{(k)}$ are iteratively constructed using only the last residual and the last basis vector $\MB{p}\iter{k-1}$ along with the (costly) matrix-vector multiplication.
Therefore, only the original system and additionally three vectors have to be stored, leading to memory savings compared to a direct solver.

Fortunately, the approximate solution $\MB{u}_{(k)}$ typically reaches a sufficient accuracy for $k \ll n$, whereby the number of iterations, $k$, depends on the condition number $\kappa$ of $\MBA$ on a subspace $\xEN$.
Particularly, the number of CG iterations, with respect to some accuracy, is bounded with the following (non-optimal) estimate
\begin{align*}
\|\underbrace{\MB{u}-\MB{u}\iter{k}}_{\substack{\text{error in $k$-th}\\\text{iteration}}}\|_{\MBA}^2 \leq 4 \left(\frac{\sqrt{\kappa}-1}{\sqrt{\kappa}+1}\right)^{2k} \|\underbrace{\MB{u}-\MB{u}\iter{0}}_{\text{initial error}}\|_{\MBA}^2,
\end{align*}
with the discrete energetic norm $\|\MBx\|_{\MBA}=\sqrt{\MBA\MBx\cdot\MBx}$.
We emphasise that the condition number is known a priori for FFTH, while it is not easily evaluated for FEM.

As will be demonstrated below, for FFTH $\kappa$ is usually low (i.e.\ the system is well conditioned) and independent on the number of degrees-of-freedom.
For FEM, however, $\kappa$ is often quite high and increases with the number of degrees-of-freedom.
This frequently leads to a high number of CG iterations, and therefore to numerical inaccuracies due to round-off errors.
The poor conditioning of the linear system can be overcome by a suitable preconditioner $\MB{M}$, which is an easily invertible matrix approximating the original matrix $\MB{A}$.
Instead of the original system, we solve
\begin{align*}
\MB{M}^{-1}\MB{A}\MB{u} = \MB{M}^{-1}\MB{b}.
\end{align*}
For the preconditioner, we incorporate incomplete Cholesky decomposition
\begin{align*}
\MB{M}=\widetilde{\MB{L}}\widetilde{\MB{L}}^T \approx \MBA,
\end{align*}
particularly we consider the zero-fill variant that builds on the same sparsity pattern of the original system $\MBA$ as for the incomplete Cholesky factor $\widetilde{\MB{L}}$.

\section{Results}
\label{sec:results}

The performance of both methods depends on the regularity of the solution, and thus on the regularity of material coefficients.
Therefore, two academic examples that are naturally defined on a regular grid are considered in Sections~\ref{sec:sizes-of-linear-systems}-\ref{sec:computational-effectiveness-of-the-conjugate-gradient-solver}: (i) one with a jump in coefficients and (ii) one with continuous coefficients.
Later, in Section~\ref{sec:examples:extra} we consider two additional examples:
(iii) A circular inclusion is used to investigate a smooth inclusion boundary, which is favourable for the flexibility of conforming discretisations for FEM.
(iv) An image-based geometry is finally used as it is generally believed to be favourable for FFTH because of its natural definition on regular grids.
All examples are constructed in a way that allows the evaluation of guaranteed bounds on homogenised properties, serving as an energetic criterion.

From the implementation perspective, we rely on open-source tools for both methods: the FEniCS project\footnote{Available from: \href{www.fenicsproject.org}{fenicsproject.org}} \cite{Fenics_v15} for FEM and FFTHomPy\footnote{Available from: \href{https://github.com/vondrejc/FFTHomPy}{github.com/vondrejc/FFTHomPy}} for FFTH.
Moreover the simple code that calculates the homogenisation problem using FEM, FFTH-Ga, and FFTH-GaNi is attached as supplementary material. It enables the reproducibility of the results presented in the manuscript.

\subsection{Material parameters}
\label{sec:material-parameters}

\begin{figure}[htb]
 \centering
 \subfigure[Square inclusion \squar]{
  \includegraphics[width=0.46\textwidth]{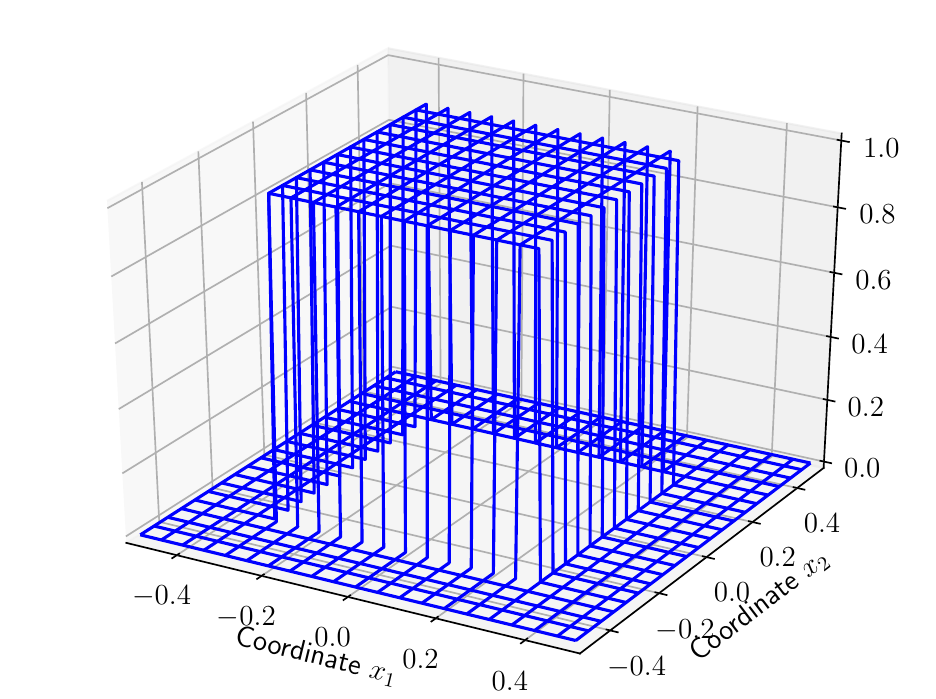}
 }
 \subfigure[Pyramid inclusion \pyramid]{
  \includegraphics[width=0.46\textwidth]{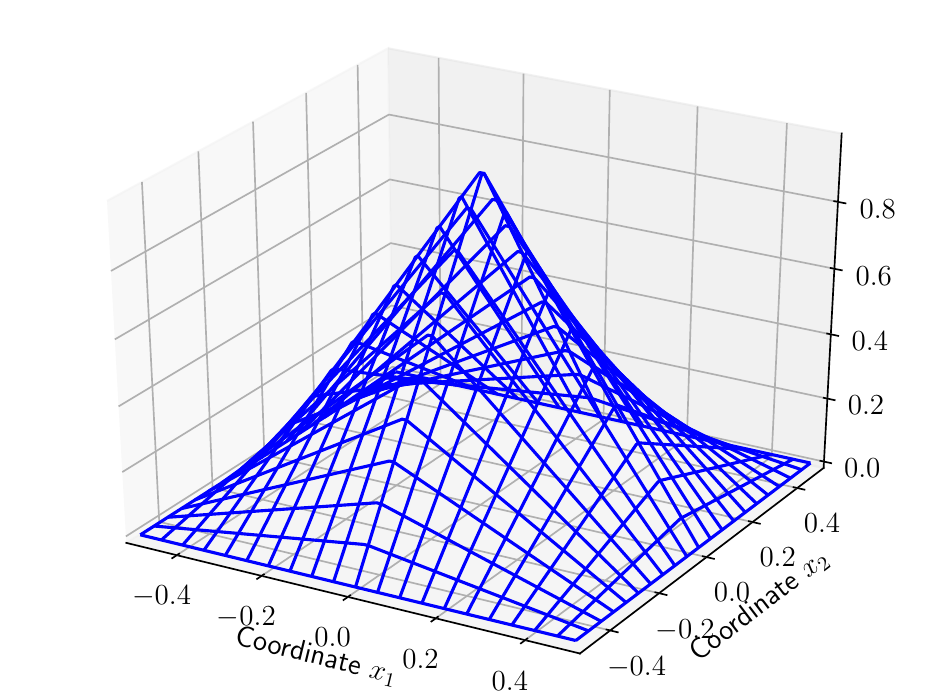}
 }
 \caption{Two-dimensional square- and pyramid-like inclusions defined by Eq.\ \eqref{eq:shape_fun}.}
 \label{fig:shape_fun}
\end{figure}

Here, we define
\begin{align*}
\B{A}(\x) &= \B{M}_{(d)} + \rho \mI f_\bullet(\x)
\end{align*}
where $\mI\in\sR^{d\times d}$ is the identity matrix, the parameter $\rho\in\{10,100\}$ corresponds to a material contrast, and  $\B{M}_{(d)}\in\sR^{d\times d}$ introduces some anisotropy. Its coefficients in 2D and 3D are set to
\begin{align*}
\B{M}_{(2)} =
\left[\begin{matrix}\frac{7}{4} & \frac{\sqrt{3}}{4}\\\frac{\sqrt{3}}{4} & \frac{5}{4}\end{matrix}\right]
\quad\text{and}\quad
\B{M}_{(3)} =
\left[\begin{matrix}\frac{31}{16} & \frac{5 \sqrt{3}}{16} & \frac{3}{8}\\\frac{5 \sqrt{3}}{16} & \frac{21}{16} & \frac{\sqrt{3}}{8}\\\frac{3}{8} & \frac{\sqrt{3}}{8} & \frac{11}{4}\end{matrix}\right]
\end{align*}
which have eigenvalues $\{1,2\}$ and $\{1,2,3\}$, respectively. Finally, functions $f_{\bullet}:\puc\rightarrow\sR^d$ defined as
\begin{subequations}
\label{eq:shape_fun}
\begin{align}
f_{\square}(\x) &=
\begin{cases}
1&\text{for }\x \text{ such that }x_i<0.3\text{ for }i=1,\dotsc,d
\\
0&\text{otherwise}
\end{cases},
\\
f_{\triangle}(\x) &=
\prod_{i=1}^d (1-2x_i),
\end{align}
\end{subequations}
describe the shape of the inclusions; these square- and the pyramid-like geometries are depicted in 2D in Figure~\ref{fig:shape_fun}.
These functions are chosen because they allow exact numerical integration using both FEM and FFTH, see \ref{sec:finite-element-method} and \ref{sec:fourier-galerkin-method}; the influence of numerical integration is thus fully eliminated.

\begin{figure}[htb]
 \centering
 \includegraphics[height=0.34\textwidth]{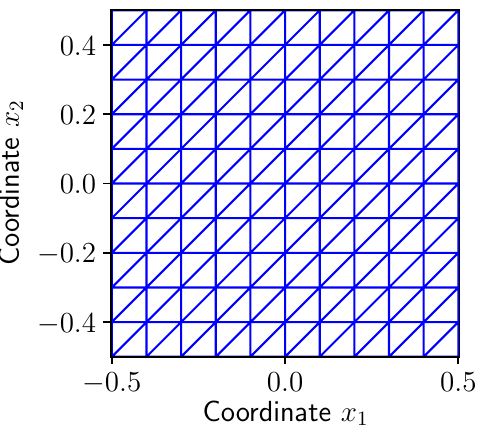}
 \caption{A regular triangular finite element mesh, with $\VN=[N,N]=[10,10]$ elements in the two-dimensional setting. (It concerns a generic mesh, the material contrast is not shown.)}
 \label{fig:mesh}
\end{figure}

For both methods, the discretisation is considered the same in all the spatial directions, e.g. in 3D $\VN=(N,N,N)$.
In the case of FFTH, exact quadrature is possible regardless the value of the discretisation parameter $\VN$, which corresponds to the number of grid points.
However, for FFTH-GaNi, favourable behaviour is obtained for odd discretisations that comply with the geometry, see \cite{VoZeMa2014GarBounds,Vondrejc2015FFTimproved};
therefore, we use $N = 5, 15, 45, 135, 405$.

In the case of FEM, a regular triangular mesh is considered, see Figure~\ref{fig:mesh} for a 2D example.
Here, the discretisation parameter $\VN$ corresponds to the number of elements --- inversely proportional to their characteristic size.
For exact quadrature, the mesh has to comply with the discontinuities in material coefficients or their derivatives; therefore multiples of 10 are used, e.g.\ $N=10, 20, 30$.
In the sequel we perform a detailed analysis is 2D, while, due to the computational demands, an analysis comprising fewer samples is performed in 3D.

\subsection{Sizes of linear systems}\label{sec:sizes-of-linear-systems}
Here, the discretisation error is expressed as a function of the size of linear systems.
For FEM, the system size is determined by the dimension of a discretisation space.
A more complicated situation arises for FFTH, for which the dimension of the approximation space is always smaller than the size of linear system.
It is caused by the necessity to enforce the compatibility condition, see \ref{sec:fourier-galerkin-method} for details.

\begin{figure}[htb]
	\centering
	\subfigure[\squar, $d=2$, $\rho=\phase$]{
		\includegraphics[width=0.46\textwidth]{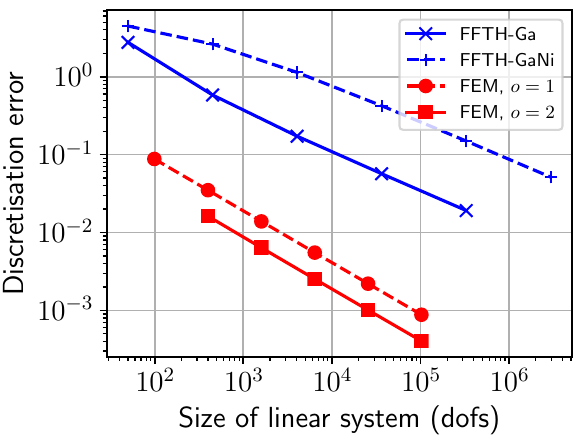}
	}
	\subfigure[\pyramid, $d=2$, $\rho=\phase$]{
		\includegraphics[width=0.46\textwidth]{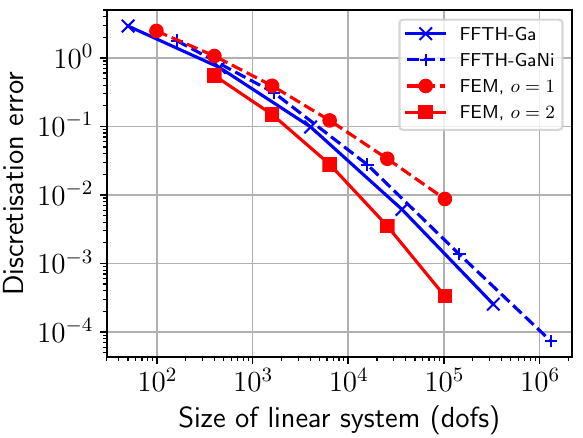}
	}
	\subfigure[\squar, $d=3$, $\rho=\phase$]{
		\includegraphics[width=0.46\textwidth]{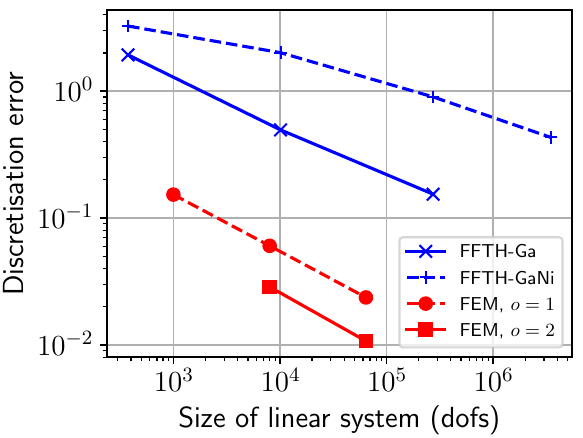}
	}
	\subfigure[\pyramid, $d=3$, $\rho=\phase$]{
		\includegraphics[width=0.46\textwidth]{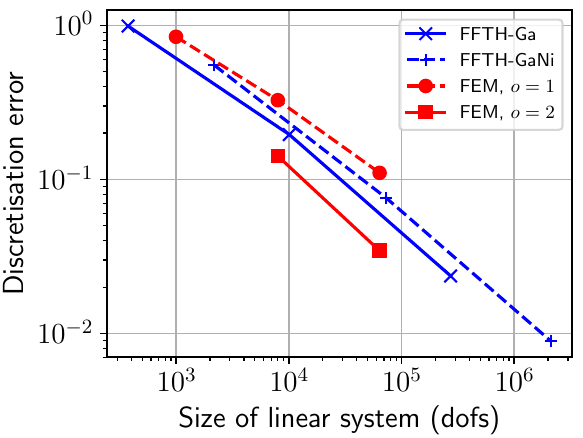}
	}
	\caption{Dependence of the approximate discretisation error in the energetic norm on the sizes of linear systems (see Eq.~\eqref{eq:discretisation_error}).
		The left figures are obtained using the square inclusion, the right figures using the pyramid inclusion. The upper figures are in $d = 2$ while the bottom ones are in $d = 3$.}
	\label{fig:linsys_size}
\end{figure}

The numerical results, compared in Figure~\ref{fig:linsys_size}, provide similar results for the 2D and the 3D problems, cf.\ Figures~\ref{fig:linsys_size}(a,b) to \ref{fig:linsys_size}(c,d).
As already reported in \cite{Vondrejc2015FFTimproved}, FFTH-Ga is always better than FFTH-GaNi, which is caused by the inconsistency error of numerical integration.
As observed for all cases in FEM, the second order polynomials have better approximation properties than the first order polynomials.
FFTH has better approximation properties when the solution is more regular, such as in the case of the pyramid inclusion.
For the square inclusion, in Figure~\ref{fig:linsys_size}(a,c), FEM requires a significantly smaller linear system than FFTH to attain the same discretisation error, for both first and second order finite elements.
For the pyramid inclusion, in Figure~\ref{fig:linsys_size}(b,d), FFTH outperforms FEM of order one but not of FEM of order two.

We note finally that the same observations are made for smaller phase contrast, for which we considered $\rho = 10$. For conciseness these results are not shown.

\subsection{Memory requirements of linear systems}\label{sec:memory-requirements-of-linear-systems}

Here, the discretisation error is compared with the memory requirements for linear systems.
We note that the CG solver imposes some additionally memory requirements, which have been neglected here. This contribution is minor and similar for both methods and it only involves storing three more vectors.

For FEM, the memory requirements are determined by an unknown- and a right-hand side vector plus the storage of the system matrix assuming compressed sparse row (CSR) storage
\begin{equation}
\tag{FEM}
\nnz\MB{u} + \nnz\MB{b} + \underbrace{2\nnz \MBA + \rank\MBA}_{\text{storing $\MBA$ in CSR format}}
\end{equation}
(wherein the symmetry of $\MBA$ has been used, but the difference in the storage for floats, used for the components, and for integers, used for the indices, has been neglected).

For FFTH, no special sparse storage for the block diagonal $\MBA$ is needed, but additionally the Fourier coefficients of the projection matrix $\MBhG$ have to be stored, which results in
\begin{align*}
\tag{FFTH-Ga}
\nnz\MBe + \nnz\MB{b} + \nnz \MBhG + \nnz \MBA &= 2dN^d + d^2N^d + d^2(2N-1)^d,
\\
\tag{FFTH-GaNi}
\nnz\MBe + \nnz\MB{b} + \nnz \MBhG + \nnz \widetilde{\MBA} &= 2dN^d + d^2N^d + d^2N^d = (2d+2d^2)N^d.
\end{align*}
Although not pursued here, in principle, the projection $\MBhG$ could be evaluated just-in-time, avoiding its storage.

In the Figure~\ref{fig:linsys_memory}, we can see that the results in memory requirements are comparable to those of the size in linear system (in Figure~\ref{fig:linsys_size}).
FEM has lower memory requirements for square inclusion, in both 2D and 3D, even when the storage of the Cholesky factors in the direct solver is considered.
As reported in \cite{Vondrejc2015FFTimproved}, FFTH-Ga performs better than FFTH-GaNi for configurations with jumps in the material coefficients.
What was not yet observed is that the opposite result is obtained for more regular data (for the pyramid inclusion).
For this case, FFTH performs better than first order FEM and comparable to second order FEM.
This becomes even more pronounced when considering the additional memory requirements that are necessary for preconditioning the linear systems of FEM, see Section~\ref{sec:condition-numbers-of-linear-systems}.

\begin{figure}[htb]
 \centering
 \subfigure[\squar, $d=2$, $\rho=\phase$]{
  \includegraphics[width=0.46\textwidth]{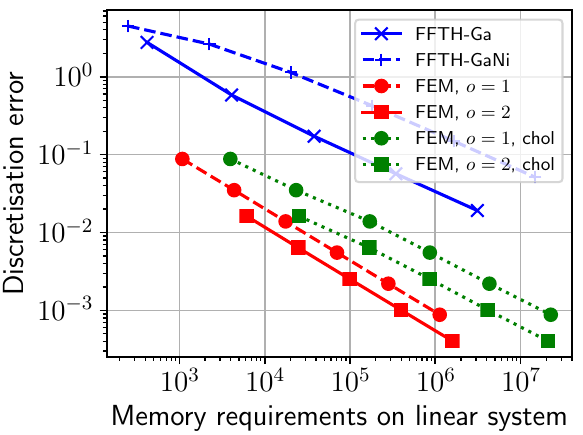}
 }
 \subfigure[\pyramid, $d=2$, $\rho=\phase$]{
  \includegraphics[width=0.46\textwidth]{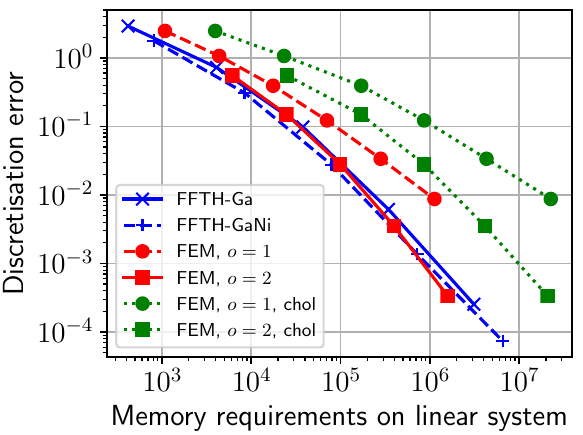}
 }
 \subfigure[\squar, $d=3$, $\rho=\phase$]{
  \includegraphics[width=0.46\textwidth]{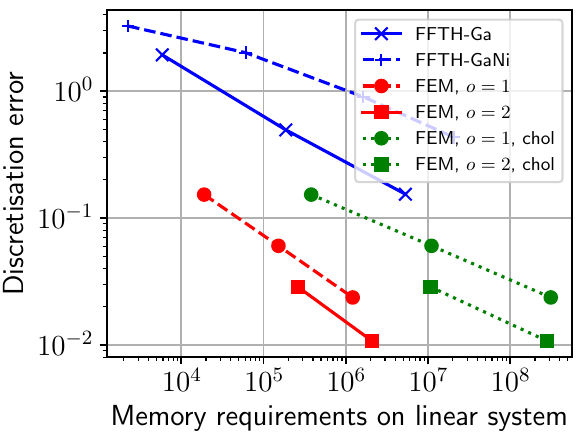}
 }
 \subfigure[\pyramid, $d=3$, $\rho=\phase$]{
  \includegraphics[width=0.46\textwidth]{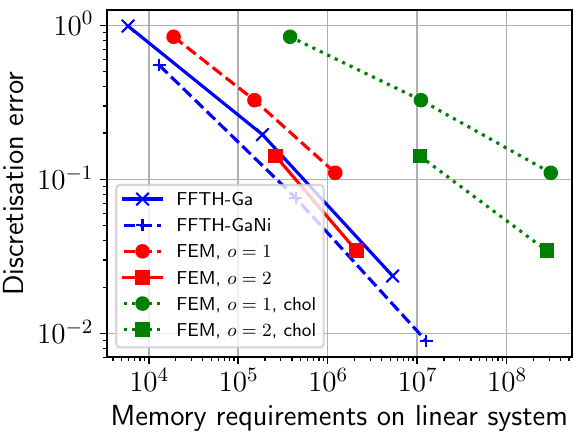}
 }
 \caption{Dependence of the approximate discretisation error in the energetic norm (see Eq.\ \eqref{eq:discretisation_error}) on the memory requirements of the linear systems (see Section~\ref{sec:memory-requirements-of-linear-systems}). The organisation of the figures is the same as in Figure~\ref{fig:linsys_size}, whereby the additional dotted green lines account for the extra memory requirements for the direct solver (i.e.\ for the storage of the Cholesky factors).}
 \label{fig:linsys_memory}
\end{figure}

\subsection{Condition numbers of linear systems}
\label{sec:condition-numbers-of-linear-systems}

\begin{figure}[htb]
	\centering
	\subfigure[\squar, $d=2$, $\rho=10$]{
		\includegraphics[width=0.46\textwidth]{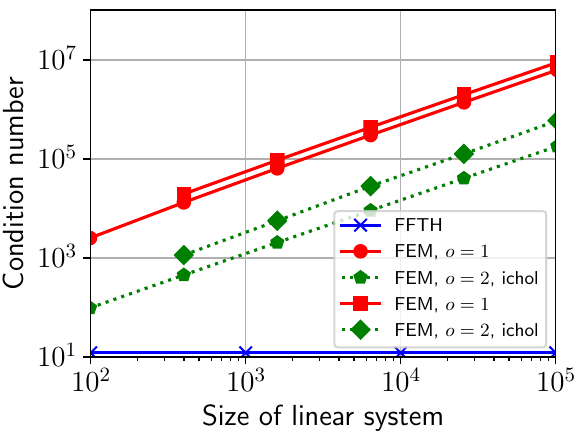}
	}
	\subfigure[\pyramid, $d=2$, $\rho=10$]{
		\includegraphics[width=0.46\textwidth]{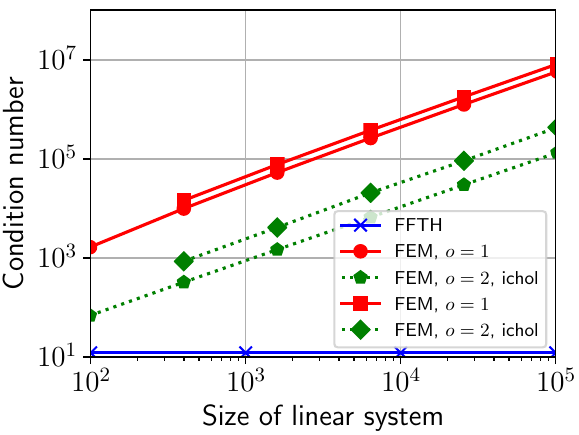}
	}
	\subfigure[\squar, $d=2$, $\rho=100$]{
		\includegraphics[width=0.46\textwidth]{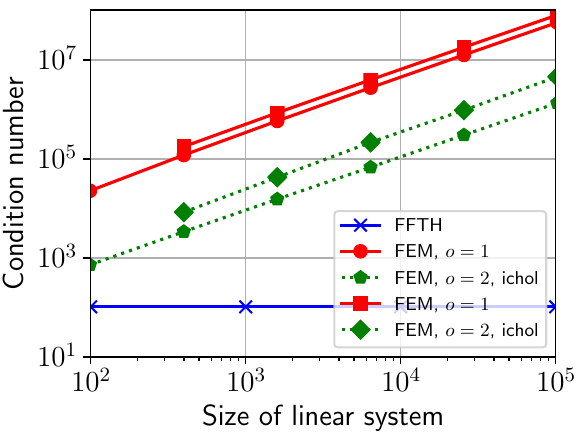}
	}
	\subfigure[\pyramid, $d=2$, $\rho=100$]{
		\includegraphics[width=0.46\textwidth]{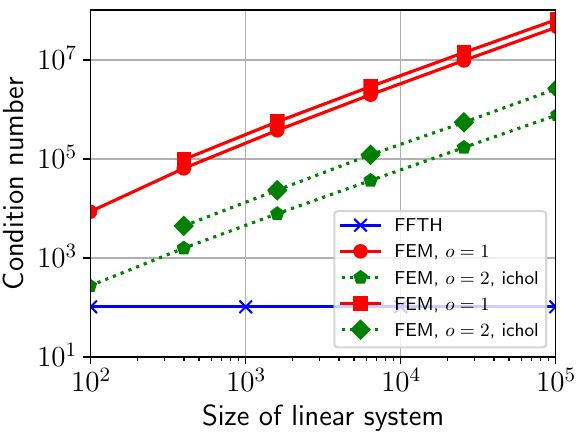}
	}
	\caption{Condition number of the linear systems in $d = 2$. The rows now correspond to different phase contrasts $\rho$, while the columns still correspond to the different considered shapes. The dotted green lines shows the condition numbers for the systems preconditioned with the no-fill variant of the incomplete Cholesky decomposition. The results for $d = 3$ are shown in Fig.~\ref{fig:ls_cond_d3}.}
	\label{fig:ls_cond_d2}
\end{figure}

\begin{figure}[htb]
	\centering
	\subfigure[\squar, $d=3$, $\rho=10$]{
		\includegraphics[width=0.46\textwidth]{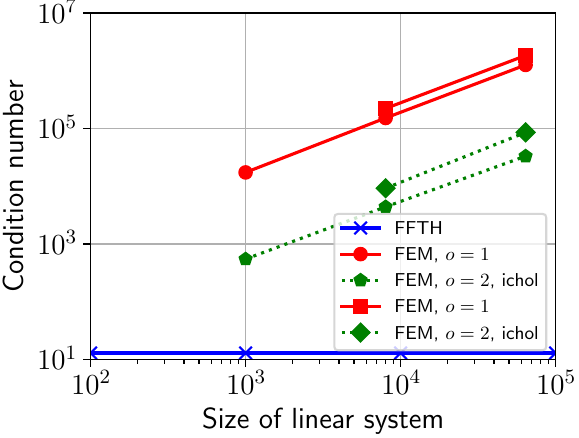}
	}
	\subfigure[\pyramid, $d=3$, $\rho=10$]{
		\includegraphics[width=0.46\textwidth]{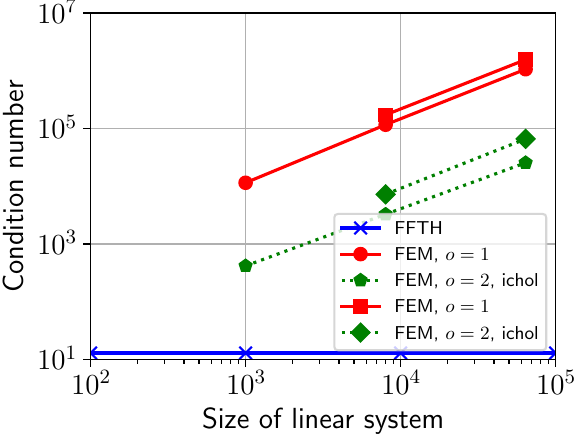}
	}
	\subfigure[\squar, $d=3$, $\rho=100$]{
		\includegraphics[width=0.46\textwidth]{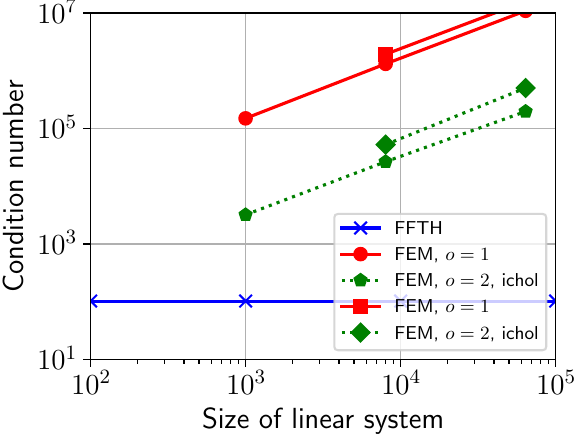}
	}
	\subfigure[\pyramid, $d=3$, $\rho=100$]{
		\includegraphics[width=0.46\textwidth]{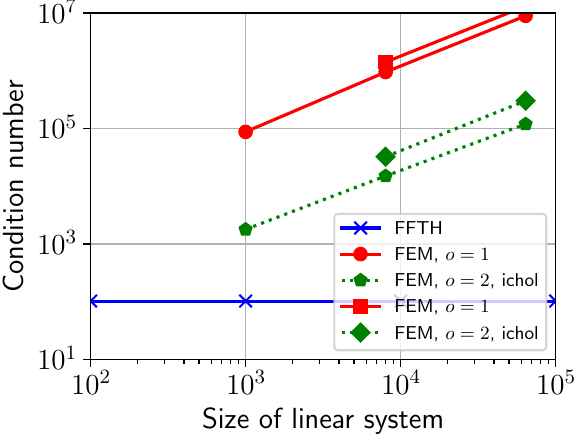}
	}
	\caption{Condition number of the linear systems in $d = 3$. The organisation of the panels is the same as in Figure~\ref{fig:ls_cond_d2} (for $d = 2$).}
	\label{fig:ls_cond_d3}
\end{figure}

The convergence of the CG solver depends on the condition number, particularly on the distribution of eigenvalues of the linear system.
Therefore, the condition numbers are depicted in Figures~\ref{fig:ls_cond_d2} and \ref{fig:ls_cond_d3} for different phase contrasts $\rho$, numbers of DOFs, and different dimensions (2D and 3D).
For FFTH, the condition number remains the same regardless the number of basis functions, thanks to their orthogonality, and depends only on the material contrast.
Particularly, it can be estimated for the square- and the pyramid-like inclusion using the smallest and the largest eigenvalues
\begin{equation*}
1=c_A \leq \lam_\mathrm{min} \leq \frac{\MBA\MBx\cdot\MBx}{\MBx\cdot\MBx}
\leq \lam_\mathrm{max} \leq  C_A=\rho+d
\quad\text{for all }\MBx\in\xEN,
\end{equation*}
which depends only on the ellipticity and continuity constants $c_A$ and $C_A$ of the material coefficients defined in \ref{sec:continuous-formulation} (see e.g. \cite{VoZeMa2014FFTH,MiVoZe2016jcp} for details).

The condition number for FEM, studied e.g.\ in \cite{Ern2006}, depends on the mesh size.
The increasing number of basis functions results in a deterioration of the condition number, which calls for the employment of special techniques such as preconditioning.
The resulting condition numbers of the different configurations, based on the no-fill variant of the incomplete Cholesky decomposition, are shown using green lines in Figures~\ref{fig:ls_cond_d2} and \ref{fig:ls_cond_d3}.
In spite of a significant improvement of the condition numbers, their dependence on the number of DOFs remains the same.
This can only be avoided by more advanced preconditioners.

\subsection{Computational effectiveness of the conjugate gradient solver}\label{sec:computational-effectiveness-of-the-conjugate-gradient-solver}

\begin{figure}[htb]
	\centering
	\subfigure[\pyramid, $d=2$, $\rho=10$]{
		\includegraphics[width=0.46\textwidth]{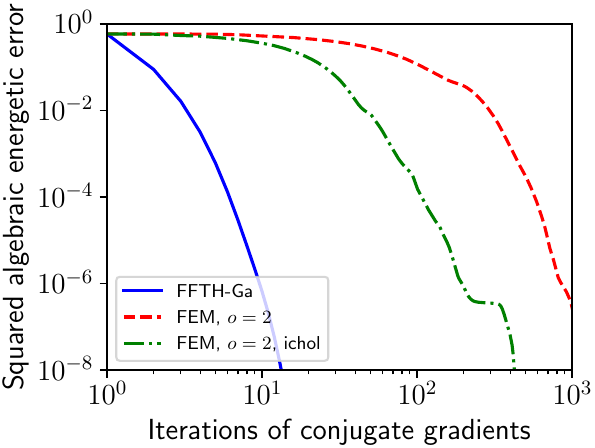}
	}
	\subfigure[\pyramid, $d=3$, $\rho=10$]{
		\includegraphics[width=0.46\textwidth]{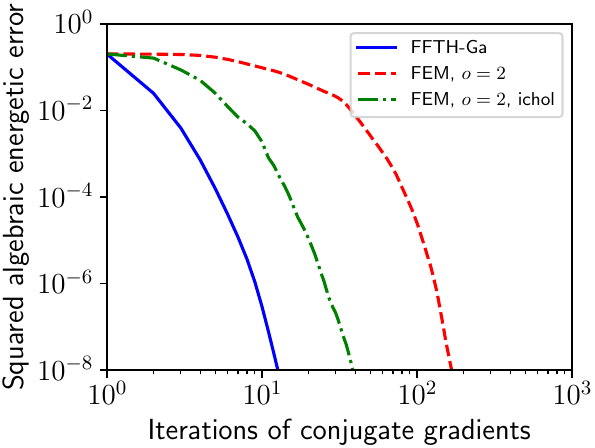}
	}
	\subfigure[\pyramid, $d=2$, $\rho=100$]{
		\includegraphics[width=0.46\textwidth]{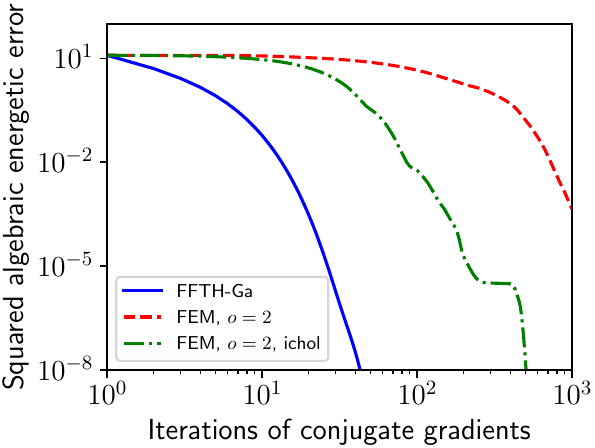}
	}
	\subfigure[\pyramid, $d=3$, $\rho=100$]{
		\includegraphics[width=0.46\textwidth]{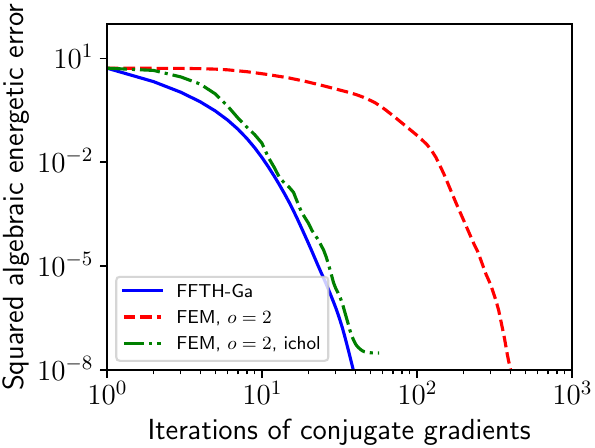}
	}
	\caption{The algebraic error during iterations of the conjugate gradients.
		For FFTH, the discretisation grid is of size $\VN=[405,405]$ in $d = 2$ and of size $\VN=[45,45,45]$ in $d = 3$.
		For FEM it is of size $\VN=[160,160]$ in $d = 2$ and of size $\VN=[20,20,20]$ in $d = 3$.
		The rows again correspond the different phase contrasts $\rho$, while this time the columns correspond to the number of dimensions; all figures are for the pyramid inclusion.}
	\label{fig:cg}
\end{figure}

The effectiveness of the conjugate gradient method is depicted in Figure~\ref{fig:cg} for both FFTH and FEM. This time we consider only one discretisation per dimension $d$ and phase contrast $\rho$.
The size of the linear systems are chosen to provide approximately the same discretisation error.
Since FFTH is not competitive for jumps in material coefficients, only the pyramid inclusion is considered.
It shows an \emph{algebraic energetic error} in iteration $k$, which also equals to an \emph{algebraic error in the upper bound}, i.e.\
\begin{equation*}
\|\MBe\iter{k}-\MBe\|_{\MBA}^2
= \MBA(\MBe\iter{k}-\MBe)\cdot (\MBe\iter{k}-\MBe)
= A_{\eff,\VN,(k)} - A_{\eff,\VN},
\end{equation*}
where $A_{\eff,\VN,(k)}$ represents the homogenised properties after the $k$-th iteration of the CG solver and $A_{\eff,\VN}$ represents the homogenised properties at convergence;
see \ref{lem:algebraic_error} for derivation.
Generally, the numerical results confirm better convergence in solving the linear system that arises from FFTH because of a better condition number.
For FEM, the preconditioner significantly reduces the number of CG iterations at reasonable memory requirements (see Figure~\ref{fig:linsys_memory}).
For $d = 3$ and the highest considered material contrast, FEM with preconditioner and FFTH-Ga achieve the similar behaviour.
However, FFTH-Ga still outperforms FEM for problems with a higher number of DOFs due to better conditioning of the linear system (results not shown).

The computational time for the CG solver is estimated by the product of the number of iterations and the cost for the matrix-vector multiplication, which is directed by the number of operations.
The matrix-vector multiplication for the FEM system is directed by the number of nonzero values in matrix $\MBA$. The computational cost of the matrix-vector multiplication for the FFTH system is directed by the FFT and its inverse, which requires $\frac{5}{2}|\VN|\log_2(|\VN|) = \frac{5}{2}N^d\log_2(N^d)$ operations (assuming real data on grid of size $\VN$).
The total number of operations for multiplication with $\MBG\MBA$ also requires the element-wise multiplication with matrices $\MBA$ and $\MBhG$, which leads to
\begin{alignat*}{3}
\tag{FFTH-Ga}
&\underbrace{d^2 (2N-1)^d}_{\text{operations for } \MBA} \;&&+ \underbrace{d^2 N^d}_{\text{operations for } \MBhG} &&+ \underbrace{5 d (2N-1)^d\log_2((2N-1)^d)}_{\text{operations for } \MBF \text{ and } \MBFi},
\\
\tag{FFTH-GaNi}
&\underbrace{d^2 N^d}_{\text{operations for } \widetilde{\MBA}} &&+ \underbrace{d^2 N^d}_{\text{operations for } \MBhG} &&+ \underbrace{5 d N^d\log_2(N^d)}_{\text{operations for } \MBF \text{ and } \MBFi}.
\end{alignat*}

\begin{figure}[htb]
 \centering
 \subfigure[\squar, $d=2$, $\rho=\phase$]{
  \includegraphics[width=0.46\textwidth]{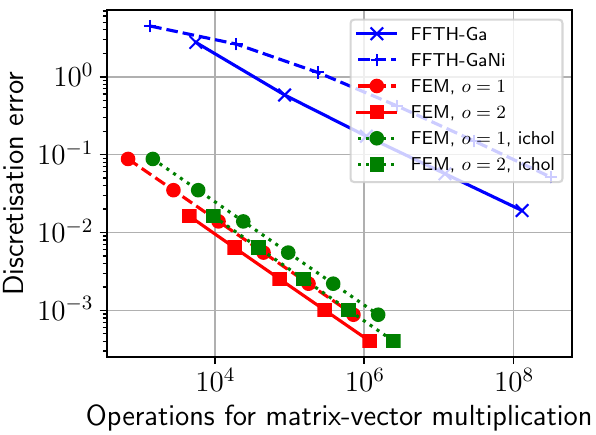}
 }
 \subfigure[\pyramid, $d=2$, $\rho=\phase$]{
  \includegraphics[width=0.46\textwidth]{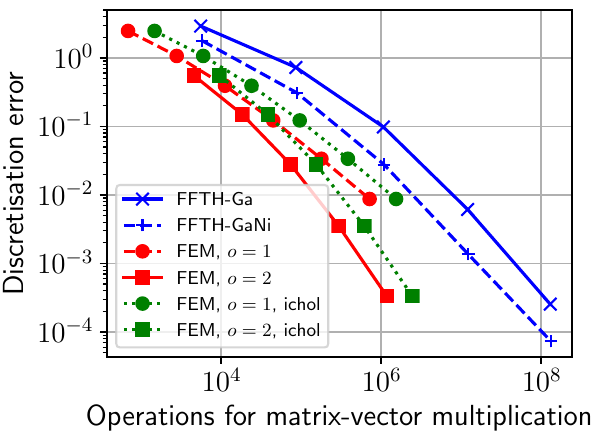}
 }
 \subfigure[\squar, $d=3$, $\rho=\phase$]{
  \includegraphics[width=0.46\textwidth]{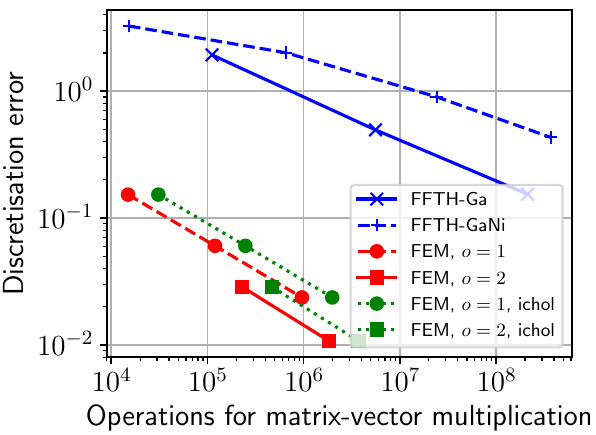}
 }
 \subfigure[\pyramid, $d=3$, $\rho=\phase$]{
  \includegraphics[width=0.46\textwidth]{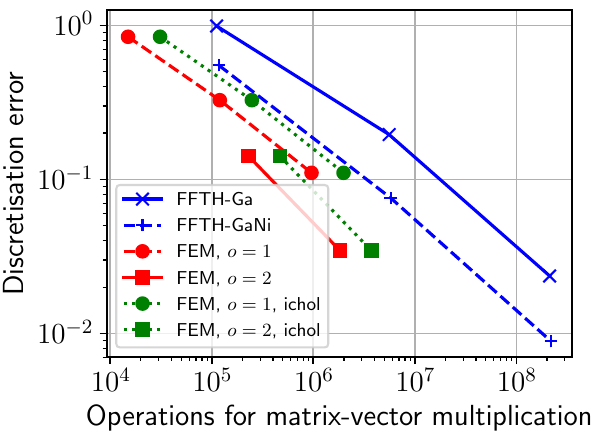}
 }
 \caption{Computational requirements determined by the number of operations for the matrix-vector multiplication. The organisation of the panels is the same as in Figure~\ref{fig:linsys_memory}.}
 \label{fig:linsys_comp}
\end{figure}

The cost of the single matrix-vector multiplication is plotted in Figure~\ref{fig:linsys_comp}, again with respect to the discretisation error, for the two shapes, in $d = 2$ and $d = 3$.
FFTH always has higher computational requirements for the matrix-vector multiplication than FEM, even when the operations for the preconditioner are considered.
The difference is more pronounced for the square inclusion and for the higher order polynomials in FEM.
To conclude, the lower computational requirements for the matrix-vector multiplication in FEM are balanced by a higher number of iterations needed for the CG solver.

\subsection{Non-regular geometries}
\label{sec:examples:extra}

Although the previous examples are well controlled, they are not very realistic as many practical configurations are not formulated on regular grids.
In that case the difference between FEM and FFTH is more distinct, as any configuration can be accurately discretised using FEM by exploiting its capability of using conforming meshes.
In contrast, the discretisation is always through a regular grid for FFTH.

We therefore consider two more realistic examples. We commence with a circular inclusion, which has a smooth boundary. The material parameters are chosen in the same way as for the square inclusion in Section~\ref{sec:material-parameters}.
Its numerical treatment is a quite straightforward for FFTH-Ga because it easily allows exact (and also effective numerical) integration, for details see \cite[Section~6]{VoZeMa2014GarBounds} and \cite[Section~4.2]{Vondrejc2015FFTimproved}.

A more complicated situation arises for FEM, which cannot directly approximate the circle's boundary with linear triangles.
To construct the mesh, the boundary is approximated by polygonal domain.
For our comparison, we also require that the numerical approximation still produces the guaranteed bound on the homogenised properties and thus on the discretisation error based on the energetic norm (see Section~\ref{sec:discretisation-error}).
This is achieved by using an outer approximation of the boundary, proposed in \cite{Dvorak1993master} and depicted in Fig.~\ref{fig:circle_mesh} for a very coarse mesh;
all the elements that are at least partially inside the circle are considered to have material properties of the inclusion.

The discretisation error in Fig.~\ref{fig:circle_error} is significantly smaller for FEM than for FFTH for the same size of the linear system, which provides an analogical result as for the square inclusion.
To conclude, FEM provides better approximation for problems with jumps regardless the smoothness of the inclusion.

\begin{figure}[htb]
\centering
\subfigure[Mesh for circle inclusion]{
	\label{fig:circle_mesh}
\includegraphics[height=0.34\textwidth]{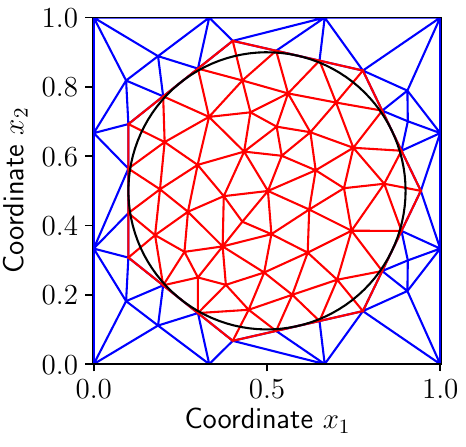}}
\subfigure[Discretisation errors]{
	\label{fig:circle_error}
\includegraphics[height=0.34\textwidth]{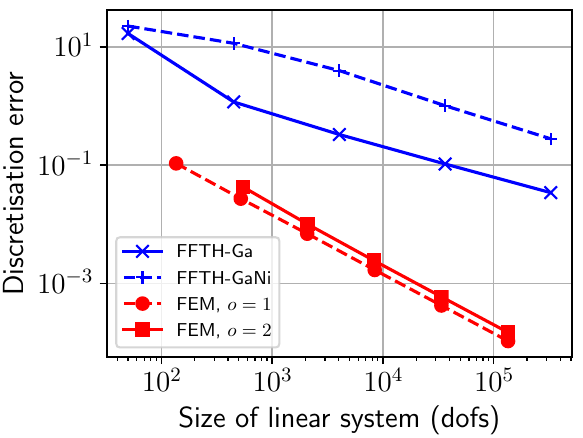}}
\caption{(a) Outer triangulation for the circle inclusion, (b) dependence of the approximate discretisation error in the energetic norm on the size of linear systems (see Eq.~\eqref{eq:discretisation_error}).}
\label{fig:circle_results}
\end{figure}

Finally, we investigate a less academic example which employs an image-based configuration.
This is an interesting case, because it is often argued that FFTH is ideally suited for such a setting, as the input is already discretised using a regular grid.
The fly ash-based aerated concrete, investigated in e.g. \cite{Hlavacek2014flyash}, has been considered here with a resolution $75\times 75 \times 75$, see Figure~\ref{fig:image_material}, and isotropic material properties $0.49\,\textrm{Wm}^{-1}\textrm{K}^{-1}$ for fly ash and $0.026\,\textrm{Wm}^{-1}\textrm{K}^{-1}$ for voids.
The results are presented in Table~\ref{tab:e} whereby the discretisation coincides with the image's resolution each time.

\begin{figure}
\centering
\includegraphics[height=0.34\linewidth]{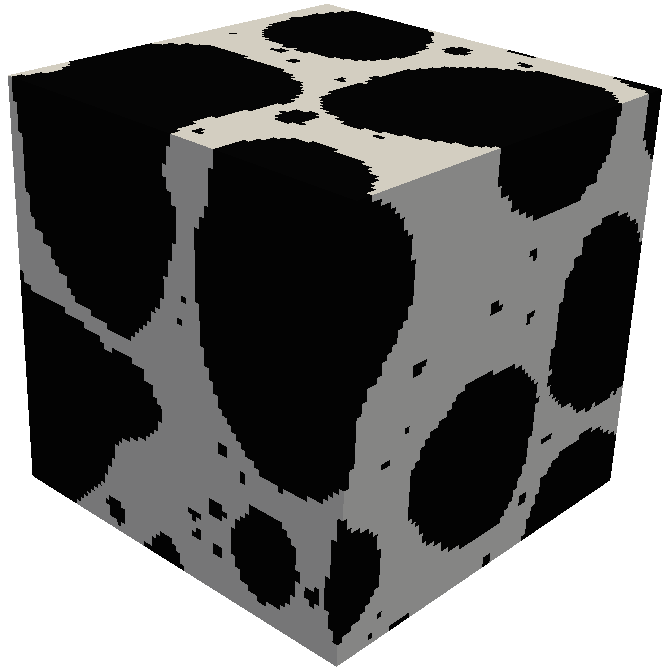}
\caption{Topology of image-based material with resolution $75\times 75\times 75$}
\label{fig:image_material}
\end{figure}

The results confirm the already presented results. FFTH-Ga has a higher discretisation error than FEM.
Moreover, FFTH has a three-times larger linear system than FEM of order $1$ because in FFTH one solves for the gradient field having three components in 3D.
FEM of order $2$ has the lowest discretisation error at the price of having a $2^3=8$ times larger linear system compared to FEM of order $1$.
FFTH-GaNi predicts the homogenised properties quite accurately. However the discretisation error is the highest.

\begin{table}[htb]
\centering
\begin{tabular}{c|ccccc}
\hline
& FFTH-Ga & FFTH-GaNi  & FFTH-GaNi bound  & FEM $o=1$ & FEM $o=2$ \\
&  Eq.~\eqref{eq:GA_minimisation} & Eq.~\eqref{eq:GaNi_minimisation} & Eq.~ \eqref{eq:gani_bound} & Eq.~\eqref{eq:FEM_minimisation} & Eq.~\eqref{eq:FEM_minimisation} \\
\hline
$A_H$ & $\leq0.13751$ & $\approx 0.13328$ & $\leq 0.14147$ & $\leq 0.13748$ & $\leq 0.13420$ \\
\#DOFs\ & $1.266\cdot 10^6$ & $1.266\cdot 10^6$ & $1.266\cdot 10^6$ & $4.219\cdot 10^5$ & $3.375\cdot 10^6$ \\
\hline
\end{tabular}
\caption{Homogenised properties of the fly ash-based aerated concrete specimen. The inequality signs signify that all the values are upper bound on the homogenised properties except for FFTH-GaNi, which provides an approximation of the homogenised properties.}
\label{tab:e}
\end{table}

\section{Conclusion and discussion}
\label{sec:conclusion}

In this paper, the finite element method (FEM) is compared to two versions of the Fourier-Galerkin method (FFTH): the Fourier-Galerkin method with numerical integration (FFTH-GaNi) \cite{ZeVoNoMa2010AFFTH,VoZeMa2014FFTH} that corresponds to the original Moulinec-Suquet algorithm, and an equivalent scheme based on exact integration (FFTH-Ga).
The latter scheme is more accurate than FFTH-GaNi but with higher memory and computational requirements \cite{Vondrejc2015FFTimproved}.
It also allows the evaluation of the energetic norm, which is used here as a criterion for comparison.
Particularly, the square norm of the local error directly also corresponds to the error in the homogenised properties, allowing us to, for the first time, make the comparison directly for local fields based on an objective criterion with a clear physical meaning.
The most important findings are summarised here:
\begin{itemize}
	\item FEM has better approximation properties than FFTH for rough data (with jumps in material coefficients). This results in smaller linear systems, lower memory requirements, and lower computational requirements than using FFTH to reach the same level of accuracy. This conclusion holds regardless of the smoothness of geometry (whereby the extreme square and circular geometries have been considered).

\item FFTH provides a good condition number of the linear systems, independent of the discretisation parameters.
The linear system that arises in FEM is less well conditioned, whereby the conditioning deteriorates for increasing system sizes.
For FFTH, this results in fewer iterations of the conjugate gradient (CG) solver used to solve the linear system.

\item The comparison for continuous material coefficients (pyramid inclusion) results in ambivalent conclusions.
The better conditioning of the FFTH linear system is compensated by lower computational costs per CG iteration in FEM (as this is only directed by the number of operations for the matrix-vector multiplication).
FFTH has comparable memory requirements as second order FEM and thus even lower when some preconditioner is used in FEM (which can be avoided in FFTH because of the favourable conditioning of the linear system).

\item As reported in \cite{Vondrejc2015FFTimproved}, FFTH-Ga, using exact integration, is better than FFTH-GaNi, using numerical integration, for materials with jumps.
What was not yet observed is that the opposite result is obtained for more regular data (the pyramid inclusion).
\end{itemize}

Our comparison also has a series of limitations, which are briefly discussed here.
\begin{itemize}
	\item An interesting comparison could also arise for \emph{dual problems} --- formulated for flux or stress.
	This would be beneficial for FFTH as it treats the primal and dual problems equally. On the contrary, mixed formulations are required by FEM which have higher memory and computational requirements and call for special solvers.
	\item We have omitted more \emph{advanced numerical techniques} such as adaptivity, multigrid, or domain decomposition methods, which are well developed for FEM but not for FFTH.
	\item Two versions of Fourier-Galerkin method (FFTH-Ga and FFTH-GaNi) have been considered. The comparison for \emph{other FFT-based approaches} (e.g.\ \cite{Brisard2012FFT,Willot2015,Schneider2016hexa}) would also be valuable, especially their approximation properties, conditioning of the system, etc.
	\item Further studies have to investigate more complex, \emph{non-linear problems}.
\end{itemize}

\subsection*{Acknowledgement}
This paper was supported by the Deutsche Forschungsgemeinschaft (grant number MA 2236/27-1) and by the Czech Science Foundation (grant number 17-04150J).
Tom de Geus was financially supported by The Netherlands Organisation for Scientific Research (NWO) by a NWO Rubicon grant number 680-50-1520.

\appendix
\section{Continuous formulation}
\label{sec:continuous-formulation}
We introduce here continuous formulation of the homogenisation problem, briefly introduced already in Section~\ref{sec:problem-description}. It is followed by the finite element discretisation in \ref{sec:finite-element-method} and the equivalent for the Fourier-Galerkin method in \ref{sec:fourier-galerkin-method}.

Here, and in the sequel, the measurable matrix field $\B{A}:\puc\rightarrow\sR^{d\times d}$ is reserved for the material coefficients which are required to be essentially
bounded, symmetric, and uniformly elliptic. This means that for almost all $\x\in\puc$, there are constants $0 < c_A \leq C_A < +\infty$ such that
\begin{align}
\label{eq:A}
\B{A}(\x)&=\B{A}^T(\x),
&
c_A \norm{\Vv}^2
&\leq
\B{A}(\x)\Vv\cdot\Vv
\leq C_A \norm{\Vv}^2
\quad\text{ for all }\Vv \in \sR^d.
\end{align}
Employing the material coefficients, we systematically employ the bilinear form $\bilf{}{}:L^2(\puc;\sR^d)\times L^2(\puc;\sR^d)\rightarrow\sR$
defined as
\begin{align*}
\bilf{\Vv}{\Vw} &:
=\int_\puc \B{A}(\x)\Vv(\x) \cdot \Vw(\x)\D{\x}.
\end{align*}


Then, the homogenisation problem is equivalently formulated in two ways: find the homogenised matrix $\B{A}_\eff\in\sR^{d\times d}$ satisfying the following minimisation problem for any macroscopic vector $\VE\in\sR^d$
\begin{subequations}
	\label{eq:homog_problem}
	\begin{align}
	\label{eq:homog_FEM}
	\B{A}_\eff\VE\cdot\VE  &= \min_{v\in H^1_0(\puc)}
	\bilf{\VE+\nabla v}{\VE+\nabla v}=\bilf{\VE+\nabla u}{\VE+\nabla u},
	\\
	\label{eq:homog_FFTH}
	&=
	\min_{\Vv\in\cE}
	\bilf{\VE+\Vv}{\VE+\Vv}=\bilf{\VE+\Ve}{\VE+\Ve},
	\end{align}
\end{subequations}
where we minimise over the scalar-valued Sobolev functions or directly over their gradients. For the latter case the minimisation space is
\begin{align*}
\cE &= \nabla \Hnp(\puc) = \{\Vv\in L^2(\puc;\sR^d):\curl\Vv = \Vo,\mean{\Vv}=\Vo\}.
\end{align*}
Note that we systematically use the variables $\Ve=\nabla u$ for the minimiser and $v$ or $\Vv$ for the scalar valued or vector valued test functions.

The first formulation \eqref{eq:homog_FEM} is suitable for the finite element method, while the second one \eqref{eq:homog_FFTH} is suitable for the Fourier-Galerkin method. In the latter case, a curl-free and a zero-mean conditions are easily enforced using an orthogonal projection $\mathcal{G}:L^2(\puc;\sR^d)\rightarrow\cE$, defined as
\begin{align}
\label{eq:proj_continuous}
\mathcal{G}[\Vv] = \sum_{\Vk\in\sZ^d} \frac{\Vk\otimes\Vk}{\Vk\cdot\Vk} \widehat{\Vv}(\Vk)\varphi^\Vk,
\qquad\text{ with }\varphi^\Vk(\x) = \exp(2\pi\imu \Vk\cdot\x)
\end{align}
for details see \cite[Lemma~2]{VoZeMa2014FFTH}.

To ease the comparison between the two discretisation methods, we will compare only one component of the homogenised matrix.
Therefore we fix the macroscopic vector to be $\VE=(\delta_{1i})_{i=1}^d\in\sR^d$ and consider the scalar effective energy
\begin{align*}
\B{A}_\eff\VE\cdot\VE = \B{A}_{\eff,11} = A_{\eff}.
\end{align*}

\section{Finite element method}
\label{sec:finite-element-method}
The finite element method is a numerical discretisation method of PDEs based on an approximation with piece-wise polynomials defined on finite elements.
The finite element mesh on $\puc$, denoted as $\mesh(\puc)$, will be composed of simplexes (triangles or tetrahedra) regularly placed in the computational domain $\puc$ as in Figure~\ref{fig:mesh};
the number of elements will be denoted with $\VN=[N,\dotsc,N]\in\sN^d$, which corresponds to a uniform discretisation in each spatial dimension.

The finite element space consists of continuous functions that are polynomials of order $p$ on each finite element.
This is explicitly defined as
\begin{align*}
\FE &= \Big\{ v\in L^2(\puc)\bigcap C(\puc): v|_T\in \cP_p(T) \text{ for every }T\in\mesh(\puc) \Big\},
\end{align*}
where the space of polynomials of order $p$ is
$$\cP_p = \Big\{ f:\sR^d\rightarrow\sR \spr f(\x) = \sum_{\V{\alp}\in\sN^d_0,\sum_i \alpha_i\leq p} a_{\V{\alpha}} x_1^{\alpha_1}x_2^{\alpha_2}\dotsc x_d^{\alpha_d}, a_{\V{\alpha}} \in \sR \Big\}.$$

Since only the gradient of the state variable occurs in the variational formulation as in the Neumann problem, we have to fix one degree of freedom in order to obtain unique solution.
Here we fix the functional values at the corner of the cell
\begin{align*}
\FEo &= \{v\in\FE: v(\Vo)=0\}.
\end{align*}
Irrespective of the point and the value fixed, the gradient fields are always zero mean. This provides a conforming approximation to the homogenisation problem.

The FEM formulation of the homogenisation problem \eqref{eq:homog_FEM} states: find approximate homogenised coefficient $\Aeffem\in \sR$ satisfying
\begin{align}
\label{eq:FEM_minimisation}
\Aeffem  &=
\min_{v_{h,p}\in\FEo}
\bilf{\VE+\nabla v_{\VN,p}}{\VE+\nabla v_{\VN,p}} = \bilf{\VE+\nabla u_{\VN,p}}{\VE+\nabla u_{\VN,p}}.
\end{align}

In order to solve this problem, we use the basis functions $\psi_i:\puc\rightarrow\sR$ for $i=1,\dotsc,n$ of the finite element space $\FEo$;
the number of basis functions corresponds to the dimension of the FEM space  $n = \dim\FEo = \bigl( Np \bigr)^d-1$.
The minimiser $u_{\VN,p}\in\FEo$, expressed with respect to this basis
\begin{align*}
u_{\VN,p} = \sum_{i=1}^n \MB{u}_{i}\psi_i,
\end{align*}
is then characterised by the optimality condition (weak formulation)
\begin{align*}
\bilf{\VE+\nabla u_{\VN,p}}{\nabla v} = 0\quad\forall v\in\FEo
\end{align*}
or by the linear system for $\MB{u}\in\sR^n$
\begin{align}
\label{eq:linsys_FEM}
\MBA\MB{u}&= \MB{b},
&
\MBA_{ij} &= \bilf{\nabla\psi_j}{\nabla\psi_i},
&
\MB{b}_{i} &= -\bilf{\VE}{\nabla\psi_i}.
\end{align}

\section{Fourier-Galerkin method}
\label{sec:fourier-galerkin-method}

The Fourier-Galerkin method builds on the discretisation with trigonometric polynomials that are well defined on regular grids of size $\VN \in \sN^d$, for details see e.g. \cite{SaVa2000PIaPDE,Vondrejc2015FFTimproved}.
Before we proceed, we will briefly introduce the following notation for vectors and a matrices
\begin{align*}
\MB{a}_{\VN}
=
\left(
a_\alpha^\Vk
\right)_{\alpha=1,\ldots,d}^{\Vk \in \ZNd}
\in \sR^{d\times\VN},
&&
\MB{A}_{\VN}
=
\left(
A_{\alpha\beta}^{\Vk\Vm}
\right)_{\alpha,\beta=1,\ldots,d}^{\Vk,\Vm \in \ZNd}
\in [\sR^{d\times\VN}]^2,
\end{align*}
where the vectors $\Vk$ belongs to the index space
\begin{align*}
\ZNd  =
\biggl\{ \Vk \in \set{Z}^d :
|k_\alpha| < \frac{N_\alpha}{2} \biggr\}.
\end{align*}
The corresponding matrix-vector and matrix-matrix multiplication are understood as
\begin{align*}
(\MB{A}_{\VN}\MB{a}_{\VN})^{\Vk}_\alp
=
\M{A}^{\Vk\Vn}_{\alp\beta}\M{a}^{\Vn}_\beta,
&&
(\MB{A}_{\VN} \MB{B}_{\VN})^{\Vk\Vm}_{\alp\gamma}
=
\MB{A}^{\Vk\Vn}_{\alp\beta}\M{B}^{\Vn\Vm}_{\beta\gamma}.
\end{align*}

The space $\sR^{d\times\VN}$ is endowed with the following inner product and norm
\begin{align*}
\MB{a}_{\VN}\cdot\MB{b}_{\VN}
=
\frac{\M{a}^\Vk_\alp \M{b}^\Vk_\alp}{\pVN}
,
&&
\| \MB{a}_{\VN} \|^2 = \MB{a}_{\VN}\cdot\MB{a}_{\VN},
\end{align*}
where $\pVN = \prod_{\alp=1}^{d} N_\alp$ is the normalisation factor satisfying that the norm on $\sR^{d\times\VN}$ corresponds to the $L^2$-norm of trigonometric polynomials. The analogical notation is used for complex-valued quantities.

The approximation space consists of trigonometric polynomials
\begin{align*}
\cTNd &=
\Bigl\{\Vv_\VN=\sum_{\Vk\in\ZNd}{\MBv_\VN^{\Vk}\varphi^{ \Vk }_\VN} \spr \MBv^{\Vk}_\VN\in\sR^d\Bigr\},
\end{align*}
where the number of discretisation points in each direction $N$, is taken odd valued only;
the Nyquist frequencies that appear when $N$ is even have to be omitted to obtain a conforming approximation, see \cite{VoZeMa2014GarBounds} for details.
The basis functions $\varphi^{\Vk}_\VN:\puc\rightarrow\sR$, the so-called fundamental trigonometric polynomials, are expressed as a linear combination
\begin{align*}
\varphi_\VN^\Vk(\x)
=
\frac{1}{\pVN}
\sum_{\Vm \in \ZNd}
\omega_{\VN}^{-\Vk\Vm} \varphi^{\Vm}(\x)
\text{ for }
\x \in \puc,
\end{align*}
of Fourier polynomials $\varphi^{\Vm}(\x) = \exp\left(2\pi\imu\sum_{\alp} m_\alp x_\alp\right)$, which are well known from the Fourier series. The complex-valued numbers
$\omega_{\VN}^{\Vm\Vk} =
\exp   \bigl(2 \pi \imu\sum_{\alp} \frac{m_\alp k_\alp}{N_\alp}  \bigr)$
for $\Vm,\Vk\in \ZNd$ are coefficients of the discrete Fourier transform (DFT) or its inverse, particularly defined as
\begin{align}
\label{eq:DFT}
\MBF_{\VN} = \frac{1}{\pVN} \bigl( \del_{\alp\beta} \omega_{\VN}^{-\Vm\Vk} \bigr)_{\alp,\beta=1,\dotsc,d}^{\Vm,\Vk\in\ZNd} \in\xhMN ,
&&
\MBFi_{\VN} = \bigl( \del_{\alp\beta} \omega_{\VN}^{\Vm\Vk} \bigr)_{\alp,\beta=1,\dotsc,d}^{\Vm,\Vk\in\ZNd} \in\xhMN.
\end{align}

Thanks to the Dirac delta property of the fundamental trigonometric polynomials on a regular grid of points $(\x_\VN^\Vk=\frac{k_\alp Y_\alp}{N_\alp})_{\alp=1,\dotsc,d}$ for $\Vk\in\ZNd$, the coefficients of the trigonometric polynomials are equal to the functional values on the grid points, i.e.\
\begin{align*}
\Vv_\VN &= \sum_{\Vk\in\ZNd}{\MBv_\VN^{\Vk}\varphi^{ \Vk }_\VN},
&
\MBv_\VN^\Vk = \Vv_\VN (\x_\VN^\Vk).
\end{align*}

The space of trigonometric polynomials cannot be directly used as an approximation space because the curl-free and zero-mean conditions have to be enforced;
it can be easily provided by the continuous projection \eqref{eq:proj_continuous} to obtain the conforming space
\begin{align*}
\cEN = \cTNd\bigcap\cE = \mathcal{G}[\cTNd].
\end{align*}

\subsection{Fourier-Galerkin method with exact integration (FFTH-Ga)}\label{sec:FFTHga}
The Fourier-Galerkin method with exact integration (FFTH-Ga) for the homogenisation problem \eqref{eq:homog_FFTH} states: find the approximate homogenised coefficient $\Aeffth\in \sR$ satisfying
\begin{align}
\label{eq:GA_minimisation}
\Aeffth &= \inf_{\Vv_\VN\in\cEN} \bilf{\VE+\Vv_\VN}{\VE+\Vv_\VN} = \bilf{\VE+\Ve_\VN}{\VE+\Ve_\VN}.
\end{align}

The direct integration of the Fourier-Galerkin formulation, contrary to FEM, leads to a full-linear system, which can be overcome with a numerical integration on a double grid \cite{VoZeMa2014GarBounds,Vondrejc2015FFTimproved,Vondrejc2017DoGIP-FEM}, whereby
the original full-linear system of size $d|\VN|$ is reformulated to the block diagonal one of size $d|2\VN-\V{1}|$.
The minimiser with respect to the basis functions
\begin{align*}
\Ve_\VN=\sum_{\Vk\in\ZNd}{\MBe_\VN^{\Vk}\varphi^{ \Vk }_\VN}
\end{align*}
can be again found from the optimality condition (weak formulation) or its discrete version
\begin{align}
\label{eq:GA_weak}
\bilf{\VE+\Ve_\VN}{\Vv_\VN} &= 0
\quad \forall\Vv_\VN\in\cEN,
&
\scal{\MBA(\MBE+\MBe_\VN)}{\MBv_\VN}_{\sR^{d\times\tVN}}&= 0
\quad \forall\MBv_\VN\in\xEN,
\end{align}
where the discrete space corresponds to curl-free and zero-mean polynomials evaluated on a double grid
\begin{align}
\label{eq:xEtN}
\xEN = \left\{ \MBe = \bigl(\Ve_\VN(\x_\tVN^\Vk)\bigr)^{\Vk\in\ZtNd}\spr \Ve_\VN\in\cEN \right\} \subset\sR^{d\times\tVN}.
\end{align}
The vectors and the matrix coefficients are explicitly defined as
\begin{align*}
\MBA^{\Vk\Vl}_{\alp\beta} &= \del_{\Vk\Vl} \int_\puc A_{\alp\beta}(\x)\varphi_\tVN^\Vk(\x)\D{\x},
&
\MBe_\VN^\Vk &= \Ve_{\VN} (\x_\tVN^\Vk),
&
\MBv_\VN^\Vk &= \Vv_{\VN} (\x_\tVN^\Vk),
&
\MBE^\Vk &= \VE
\end{align*}
for $\Vk,\Vl\in\ZtNd$;
the closed-form evaluation of the block-diagonal matrix $\MBA\in\xMtN$ is described in \cite{VoZeMa2014GarBounds,Vondrejc2015FFTimproved}.

The linear system cannot be directly derived from the discrete weak formulation \eqref{eq:GA_weak} because the test vectors $\MBv\in\xEN$ do not span the whole space $\sR^{d\times\tVN}$ as test vectors corresponds to trigonometric polynomials with a curl-free and a zero-mean condition.
However, this condition can be enforced with the discrete orthogonal projection $\MBG:\xVtN\rightarrow\xEN$, derived from the continuous one \eqref{eq:proj_continuous}, i.e
\begin{align*}
\MBG &= \MBFi_\tVN \MBhG \MBF_\tVN,
&
\MBhG\in\xVtN
\text{ with components }
\MBhG^{\Vk\Vl} &=
\begin{cases}
\frac{\Vk\otimes\Vk}{\Vk\cdot\Vk} \del_{\Vk\Vl},
&
\text{for }\Vk,\Vl\in\ZNd
\\
\Vo
&
\text{for }\Vk,\Vl\in\ZtNd\setminus\ZNd
\end{cases},
\end{align*}
and $\MBF^{-1}_\tVN,\MBF_\tVN$ are the (inverse) DFT matrices from \eqref{eq:DFT}.
Then the linear system states: find $\MBe\in\xEN\subset\sR^{d\times\tVN}$ satisfying
\begin{align}
\label{eq:linsys_FFTHga}
\MBG\MBA\MBe_\VN = -\MBG\MBA\MBE,
\end{align}
which can be efficiently solved by conjugate gradients (for comparison of linear solvers see \cite{Moulinec2014comparison,MiVoZe2016jcp}). This system of size
\begin{align*}
\dim\cT_{\tVN}^d &= d\cdot \prod_\alp (2N_\alp-1) = d(2N-1)^d,
\end{align*}
was also reformulated in \cite[section~4.4]{Vondrejc2015FFTimproved} to a reduced size $\dim\cTNd = dN^d$ with a very similar structure.
We also note that the number of independent degrees of freedom are governed by the dimension of gradient-valued trigonometric polynomials, expressed as
\begin{align*}
\dim\cEN &= N^d-1.
\end{align*}

\subsection{Fourier-Galerkin method with numerical integration (FFTH-GaNi)}
\label{sec:FFTHgani}

In previous section, FFTH-Ga based on exact integration has been presented.
Although, it is possible for a big class of material coefficients, it leads to higher computational and memory requirements as it is evaluated on a double grid.
A simpler approach incorporates the numerical integration of the bilinear forms
\begin{align*}
\bilf{\VE+\Ve_\VN}{\VE+\Ve_\VN} \approx \sum_{\Vk\in\ZNd} \B{A}(\x_\VN^\Vk)[\VE+\Ve_\VN(\x_\VN^\Vk)]\cdot[\VE+\Ve_\VN(\x_\VN^\Vk)]
=\widetilde{\MBA}(\MBE+\MBe_\VN)\cdot (\MBE+\MBe_\VN).
\end{align*}
This numerical scheme is fully equivalent to the original Moulinec and Suquet algorithm \cite{Moulinec1994FFT} as the solution vectors are the same \cite{VoZeMa2012LNSC,VoZeMa2014FFTH}.

The Fourier-Galerkin method with numerical integration (FFTH-GaNi) for the homogenisation problem \eqref{eq:homog_FFTH} states: find the approximate homogenised coefficient $\widetilde{A}_{\eff,\VN} \in \sR$ satisfying
\begin{align}
\label{eq:GaNi_minimisation}
\widetilde{A}_{\eff,\VN} &= \inf_{\MBe_\VN\in\xEN} \widetilde{\MBA}(\MBE+\MBe_\VN)\cdot (\MBE+\MBe_\VN) = \widetilde{\MBA}(\MBE+\widetilde{\MBe}_\VN)\cdot (\MBE+\widetilde{\MBe}_\VN),
\end{align}
where the discrete vectors $\MBv_\VN,\widetilde{\MBe}_\VN$ are, contrary to \eqref{eq:xEtN}, defined on an original grid
\begin{align}
\xEN = \left\{ \MBe_\VN = \bigl(\Ve_\VN(\x_\VN^\Vk)\bigr)^{\Vk\in\ZNd}\spr \Ve_\VN\in\cEN \right\} \subset\sR^{d\times\VN}.
\end{align}
The linear system corresponding to the optimality condition of \eqref{eq:GaNi_minimisation} has the same structure as in the case of FFTH-Ga. It states: find $\widetilde{\MBe}_\VN\in\xEN\subset\sR^{d\times\VN}$ satisfying
\begin{align}
\MBG\widetilde{\MBA}\widetilde{\MBe}_\VN = -\MBG\widetilde{\MBA}\MBE.
\end{align}
Compared to FFTH-Ga system \eqref{eq:linsys_FFTHga}, the material matrix $\widetilde{\MBA}$ is changed and the discrete orthogonal projection $\MBG:\sR^{d\times\VN}\rightarrow\xEN$, derived from the continuous one \eqref{eq:proj_continuous}, reads
\begin{align*}
\MBG &= \MBFi_\VN \MBhG \MBF_\VN,
&
\MBhG\in\sR^{d\times\VN}
\text{ with components }
\MBhG^{\Vk\Vl} &=
\frac{\Vk\otimes\Vk}{\Vk\cdot\Vk} \del_{\Vk\Vl}
\quad
\text{for }\Vk,\Vl\in\ZNd
\end{align*}
The and $\MBF^{-1}_\VN,\MBF_\VN$ are the (inverse) DFT matrices from \eqref{eq:DFT}.

Because of inconsistency error caused by numerical integration, the homogenised value $\widetilde{A}_{\eff,\VN}$ does not provide a bound on $\Aeff$.
However, the discrete minimiser $\widetilde{\MBe}_\VN$ still represents a conforming trigonometric polynomial
\begin{align*}
\widetilde{\Ve}_\VN =	\sum_{\Vk\in\ZNd} \widetilde{\MBe}_\VN^\Vk\varphi_\VN^\Vk,
\end{align*}
which can be used to evaluate the bilinear forms a posteriori.
This gives rise to yet another homogenised value $\ol{A}_{\eff,\VN}$ complying with the structure of homogenised properties
\begin{align}
\label{eq:gani_bound}
\Aeff \leq A_{\eff,\VN} \leq \ol{A}_{\eff,\VN} := \bilf{\VE+\widetilde{\Ve}_\VN}{\VE+\widetilde{\Ve}_\VN},
\end{align}
see \cite{Vondrejc2015FFTimproved} for details.

\section{Relation between energetic norm and homogenised properties}
\label{sec:energetic-norm}

In this section, we make a relation between homogenised properties and energetic norm $\|\cdot\|_{\B{A}}:L^2(\puc;\sR^d)\rightarrow\sR$ that is defined as $\|\Vv\|^2_{\B{A}}=\bilf{\Vv}{\Vv}$.
\begin{lemma}
	\label{lem:ener_norm}
	Let $\mathcal{X}$ be a subspace of $L^2(\puc;\sR^d)$ and let $\Ve_\mathcal{X}$ be a minimiser of homogenisation problem over $\mathcal{X}$, i.e.\
	\begin{align}
	\label{eq:homog_X}
	\Ve_\mathcal{X} = \argmin_{\Vv\in\mathcal{X}} \bilf{\VE+\Vv}{\VE+\Vv}.
	\end{align}
	Then for any conforming vector $\Vv\in \mathcal{X}$, we have the relation
	between the square of the energetic norm of the error and the error in the homogenised properties
	\begin{align}
	\label{eq:relation_general}
	\|\Ve_\mathcal{X}-\Vv\|^2_{\B{A}} = \bilf{\VE+\Vv}{\VE+\Vv} - \bilf{\VE+\Ve_\mathcal{X}}{\VE+\Ve_\mathcal{X}}
	\end{align}
\end{lemma}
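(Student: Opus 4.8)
The plan is to exploit the quadratic (variational) structure of the problem: since $\Ve_\mathcal{X}$ minimises a quadratic functional over the \emph{linear subspace} $\mathcal{X}$, it is characterised by a first-order optimality condition (a Galerkin orthogonality relation), and the claimed identity \eqref{eq:relation_general} is then nothing more than the algebraic expansion of the quadratic about the minimiser, with the cross term annihilated by that orthogonality.

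First I would establish the optimality condition
\[
\bilf{\VE+\Ve_\mathcal{X}}{\Vw} = 0 \quad \forall\, \Vw\in\mathcal{X}.
\]
To this end, fix $\Vw\in\mathcal{X}$ and consider the scalar function $g(t) = \bilf{\VE+\Ve_\mathcal{X}+t\Vw}{\VE+\Ve_\mathcal{X}+t\Vw}$ for $t\in\sR$. Because $\mathcal{X}$ is a subspace, $\Ve_\mathcal{X}+t\Vw\in\mathcal{X}$, so by the defining property \eqref{eq:homog_X} of $\Ve_\mathcal{X}$ the function $g$ attains its global minimum at $t=0$. Expanding by bilinearity and using the symmetry $\B{A}=\B{A}^T$ gives the scalar quadratic $g(t) = g(0) + 2t\,\bilf{\VE+\Ve_\mathcal{X}}{\Vw} + t^2\,\bilf{\Vw}{\Vw}$, so that $g'(0)=2\,\bilf{\VE+\Ve_\mathcal{X}}{\Vw}=0$, which is the desired condition.

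Next, for an arbitrary $\Vv\in\mathcal{X}$ I would set $\Vw:=\Vv-\Ve_\mathcal{X}$, which again lies in $\mathcal{X}$ since $\mathcal{X}$ is a subspace, and write $\VE+\Vv = (\VE+\Ve_\mathcal{X})+\Vw$. Expanding by bilinearity and symmetry yields
\[
\bilf{\VE+\Vv}{\VE+\Vv} = \bilf{\VE+\Ve_\mathcal{X}}{\VE+\Ve_\mathcal{X}} + 2\,\bilf{\VE+\Ve_\mathcal{X}}{\Vw} + \bilf{\Vw}{\Vw}.
\]
The middle term vanishes by the optimality condition just derived, and the last term equals $\|\Vw\|_{\B{A}}^2 = \|\Ve_\mathcal{X}-\Vv\|_{\B{A}}^2$ by the definition of the energetic norm. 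Rearranging the remaining two terms reproduces exactly \eqref{eq:relation_general}.

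I do not expect a genuine obstacle here; the identity is purely algebraic once orthogonality is available. The only points demanding care are that $\mathcal{X}$ be a true linear subspace, so that both $\Ve_\mathcal{X}+t\Vw$ and $\Vv-\Ve_\mathcal{X}$ are admissible, and that the symmetry of $\B{A}$ is invoked to merge the two cross terms into the single factor $2\,\bilf{\VE+\Ve_\mathcal{X}}{\Vw}$. Notably, the uniform ellipticity in \eqref{eq:A} is \emph{not} needed for the identity itself; it serves only to guarantee that the left-hand side is a bona fide nonnegative norm and that the minimiser exists and is unique.
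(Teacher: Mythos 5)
Your proof is correct and follows essentially the same route as the paper: expand the quadratic by bilinearity and kill the cross term with the Galerkin orthogonality condition $\bilf{\VE+\Ve_\mathcal{X}}{\Vw}=0$ for $\Vw\in\mathcal{X}$. The only (welcome) difference is that you derive this optimality condition explicitly via the scalar function $g(t)$, whereas the paper simply invokes it as the weak formulation of \eqref{eq:homog_X}.
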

\begin{proof} Using linearity of bilinear forms, we deduce
	\begin{align*}
	\|\Ve_\mathcal{X}-\Vv\|^2_{\B{A}} &= \bilf{\Ve_\mathcal{X}-\Vv}{\Ve_\mathcal{X}-\Vv}
	= \bilf{\Ve_\mathcal{X}-\Vv\pm\VE}{\Ve_\mathcal{X}-\Vv\pm\VE},
	\\
	&= \bilf{\VE+\Ve_\mathcal{X}}{\VE+\Ve_\mathcal{X}} + \bilf{\VE+\Vv}{\VE+\Vv} - 2\bilf{\VE+\Ve_\mathcal{X}}{\VE+\Vv}.
	\end{align*}
	Now, we reformulate the last term using the optimality condition (weak formulation) of \eqref{eq:homog_X}, namely $\bilf{\VE+\Ve_\mathcal{X}}{\Vv}=0\quad\forall \Vv\in\mathcal{X}$, which surely holds for a special choice of the test function as $\Vv=\Ve_\mathcal{X}$. Hence
	\begin{align*}
	\bilf{\VE+\Ve_\mathcal{X}}{\VE+\Vv} &= \bilf{\VE+\Ve_\mathcal{X}}{\VE} + \underbrace{\bilf{\VE+\Ve_\mathcal{X}}{\Vv}}_{=0},
	\\
	&= \bilf{\VE+\Ve_\mathcal{X}}{\VE} + \underbrace{\bilf{\VE+\Ve_\mathcal{X}}{\Ve_\mathcal{X}}}_{=0} = \bilf{\VE+\Ve_\mathcal{X}}{\VE+\Ve_\mathcal{X}}.
	\end{align*}
	The combination of two formulas in the proof gives as the required formula \eqref{eq:relation_general}.
\end{proof}
\begin{corollary}
	The squared energetic norm of discretisation error is equal to the error in the homogenised properties for the FEM, FFTH-Ga, and FFTH-GaNi discretisations, particularly
	\begin{align*}
	\|\Ve-\Ve_\VN\|^2_{\B{A}}
	&= \Aeffth - \Aeff,
	&
	\|\Ve-\widetilde{\Ve}_\VN\|^2_{\B{A}}
	&= \ol{A}_{\eff,\VN} - \Aeff,
	&
	\|\nabla u - \nabla u_{\VN,p}\|^2_{\B{A}}
	&= \Aeffem - \Aeff.
	\end{align*}
\end{corollary}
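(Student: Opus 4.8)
The plan is to derive all three identities as direct special cases of Lemma~\ref{lem:ener_norm}, applied with the single choice $\mathcal{X}=\cE$, the full space of admissible gradient fields. Over this space the minimiser is the exact solution $\Ve=\nabla u$ by the equivalence of the two formulations in \eqref{eq:homog_problem}, so $\Ve_\mathcal{X}=\Ve$ and the baseline energy on the right-hand side of the Lemma is $\bilf{\VE+\Ve}{\VE+\Ve}=\Aeff$ in every case. The only hypothesis that then needs checking is that the approximate field supplied as the test vector $\Vv$ is conforming, i.e.\ genuinely an element of $\cE$; once this is established the identity follows by reading off the Lemma.

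For {\FFTHga} I would take $\Vv=\Ve_\VN$. Conformity holds because $\cEN=\cTNd\cap\cE\subset\cE$ by construction, so $\Ve_\VN\in\cE$, and Lemma~\ref{lem:ener_norm} reads $\|\Ve-\Ve_\VN\|^2_{\B{A}}=\bilf{\VE+\Ve_\VN}{\VE+\Ve_\VN}-\Aeff=\Aeffth-\Aeff$. The {\FEM} identity is obtained identically with $\Vv=\nabla u_{\VN,p}$: gradients are automatically curl-free, and the zero-mean property of gradient fields noted just after the definition of $\FEo$ guarantees $\nabla u_{\VN,p}\in\cE$, whence $\|\nabla u-\nabla u_{\VN,p}\|^2_{\B{A}}=\Aeffem-\Aeff$. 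In both cases it is worth emphasising that the discreteness of the approximation space plays no role in the argument beyond the inclusion into $\cE$.

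The {\FFTHgani} case is the one requiring care, and I expect it to be the main subtlety. Here the computed field $\widetilde{\Ve}_\VN$ is the minimiser of the \emph{numerically integrated} functional over $\xEN$, so it is not the Galerkin minimiser of the exact bilinear form, and the inconsistent value $\widetilde{A}_{\eff,\VN}$ does not enter the identity at all. The key observation is that Lemma~\ref{lem:ener_norm} imposes \emph{no} minimality requirement on the test vector $\Vv$; it only requires $\Vv\in\mathcal{X}$. Since $\widetilde{\Ve}_\VN=\sum_{\Vk\in\ZNd}\widetilde{\MBe}_\VN^\Vk\varphi_\VN^\Vk$ is a conforming trigonometric polynomial, it lies in $\cEN\subset\cE$, so I would apply the Lemma with $\Vv=\widetilde{\Ve}_\VN$. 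The energy produced on the right is then the \emph{exactly} integrated energy of this field, which is precisely the a-posteriori bound $\ol{A}_{\eff,\VN}=\bilf{\VE+\widetilde{\Ve}_\VN}{\VE+\widetilde{\Ve}_\VN}$ from \eqref{eq:gani_bound}, yielding $\|\Ve-\widetilde{\Ve}_\VN\|^2_{\B{A}}=\ol{A}_{\eff,\VN}-\Aeff$. The single point to verify explicitly is this matching of energies — that the quantity appearing in the Lemma is the exactly integrated $\ol{A}_{\eff,\VN}$ rather than the numerically integrated $\widetilde{A}_{\eff,\VN}$ — which is exactly where the variational crime would otherwise break the identity.
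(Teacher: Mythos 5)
Your proof is correct and follows essentially the same route as the paper: both invoke Lemma~\ref{lem:ener_norm} with $\mathcal{X}=\cE$, identify $\Ve_\mathcal{X}=\Ve=\nabla u$, and observe that each approximate field is a conforming element of $\cE$ whose exactly integrated energy equals the corresponding homogenised property. Your explicit remark that the lemma requires no minimality of the test vector $\Vv$ --- so that for {\FFTHgani} the relevant quantity is the a-posteriori value $\ol{A}_{\eff,\VN}$ rather than the inconsistent $\widetilde{A}_{\eff,\VN}$ --- is exactly the point the paper's terser proof relies on.
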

\begin{proof}
	This is a consequence of previous lemma for $\mathcal{X}=\cE$.
	Then $\Ve=\nabla u$ is the minimiser of homogenisation problem \eqref{eq:homog_problem} and $\Ve_\VN, \widetilde{\Ve}_\VN, \nabla u_{\VN,p}\in\mathcal{X}$ are conforming fields obtained from FFTH-Ga, FFTH-GaNi, and FEM;
	their evaluation according to \eqref{eq:FEM_minimisation}, \eqref{eq:GA_minimisation}, and \eqref{eq:gani_bound} provides the corresponding homogenised properties.
\end{proof}
\begin{corollary}
	\label{lem:algebraic_error}
	Let $\MBe\in\xEN$ and $\MB{u}\in\sR^n$ be the solution of the algebraic systems \eqref{eq:linsys_FFTHga} and \eqref{eq:linsys_FEM} obtained from FFTH-Ga and FEM.
	And let $\MBe\iter{i}\in\xEN$ and $\MB{u}\iter{i}\in\sR^n$ be any vectors --- possibly approximate solutions obtained from the iterative solver (CG) after $i$ iterations.
	Then the squared energetic norm of the algebraic error is equal to the following error in homogenised properties
	\begin{align*}
	\|\MBe-\MBe\iter{i}\|_{\MBA} &= A_{\eff,\VN,(i)} - \Aeffth,
	&
	\|\MB{u}-\MB{u}\iter{i}\|_{\MBA} &= A_{\eff,\VN,p,(i)} - \Aeffem,
	\end{align*}
	where $A_{\eff,\VN,(i)}$ and $A_{\eff,\VN,p,(i)}$ are the homogenised properties obtained by evaluation of the homogenised problem with fields determined by $\MBe\iter{i}\in\xEN$ and $\MB{u}\iter{i}\in\sR^n$.
\end{corollary}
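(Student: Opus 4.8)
The plan is to recognise both identities as instances of Lemma~\ref{lem:ener_norm} applied to a \emph{discrete} subspace, followed by a translation of the continuous energetic norm $\|\cdot\|_{\B{A}}$ into the algebraic one $\|\cdot\|_{\MBA}$. The crucial observation is that Lemma~\ref{lem:ener_norm} holds for an arbitrary subspace $\mathcal{X}\subset L^2(\puc;\sR^d)$: whereas the preceding corollary took $\mathcal{X}=\cE$ and used the discrete minimiser as the test field to extract the discretisation error, here I would instead take $\mathcal{X}$ to be the finite-dimensional approximation space and use the CG iterate as the test field to extract the algebraic error.

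Concretely, for {\FFTHga} I would set $\mathcal{X}=\cEN$, so that the exact discrete solution $\Ve_\VN$ of \eqref{eq:GA_minimisation} plays the role of $\Ve_\mathcal{X}$; for {\FEM} I would set $\mathcal{X}=\nabla\FEo$, so that $\nabla u_{\VN,p}$ from \eqref{eq:FEM_minimisation} plays that role. The test field $\Vv$ is taken to be the field determined by the iterate: the trigonometric polynomial $\Ve\iter{i}$ associated with $\MBe\iter{i}$, respectively $\nabla u\iter{i}$ with $u\iter{i}=\sum_k(\MB{u}\iter{i})_k\psi_k$. Since $\xEN$ is a linear space and the projection $\MBG$ keeps the CG iterates curl-free and zero-mean, the hypothesis $\MBe\iter{i}\in\xEN$ yields $\Ve\iter{i}\in\cEN$; on the {\FEM} side every $\MB{u}\iter{i}\in\sR^n$ trivially corresponds to an element of $\FEo$. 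Both test fields are therefore conforming, and Lemma~\ref{lem:ener_norm} gives at once
\begin{align*}
\|\Ve_\VN-\Ve\iter{i}\|^2_{\B{A}} &= A_{\eff,\VN,(i)}-\Aeffth,
&
\|\nabla u_{\VN,p}-\nabla u\iter{i}\|^2_{\B{A}} &= A_{\eff,\VN,p,(i)}-\Aeffem,
\end{align*}
because the first bilinear form on the lemma's right-hand side is exactly the homogenised value evaluated from the iterate field, while the second is the converged discrete value.

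It then remains to identify the left-hand energetic norms with the discrete ones. For {\FEM} this is routine: writing $w=\sum_k(\MB{u}-\MB{u}\iter{i})_k\psi_k$ and using $\MBA_{kl}=\bilf{\nabla\psi_l}{\nabla\psi_k}$ together with bilinearity gives $\|\MB{u}-\MB{u}\iter{i}\|_{\MBA}^2=\MBA(\MB{u}-\MB{u}\iter{i})\cdot(\MB{u}-\MB{u}\iter{i})=\bilf{\nabla w}{\nabla w}=\|\nabla u_{\VN,p}-\nabla u\iter{i}\|^2_{\B{A}}$. For {\FFTHga} the same identity holds because $\MBA$ is assembled by exact integration on the double grid, so that $\MBA\MBx\cdot\MBv$ (with the normalised product on $\sR^{d\times\tVN}$) reproduces the continuous bilinear form of the trigonometric polynomials determined by $\MBx$ and $\MBv$; taking $\MBx=\MBe-\MBe\iter{i}$ then finishes the argument.

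The main obstacle is not the algebra, which the lemma absorbs, but this final correspondence between the discrete and the continuous bilinear forms. It is automatic for {\FEM} by the very definition of the stiffness matrix, but on the Fourier side it rests essentially on the exactness of the quadrature: the numerically integrated matrix $\widetilde{\MBA}$ of {\FFTHgani} does \emph{not} reproduce the continuous form, which is precisely why only {\FFTHga} enters this statement. A secondary point to verify is the subspace membership of the iterates, so that their difference stays in $\mathcal{X}$; this is exactly where the projection $\MBG$ is indispensable on the {\FFTH} side.
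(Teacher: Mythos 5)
Your proposal is correct and follows essentially the same route as the paper: identify the discrete energetic norm $\|\cdot\|_{\MBA}$ with the continuous bilinear form of the fields determined by the coefficient vectors, then apply Lemma~\ref{lem:ener_norm} with the discrete approximation space ($\mathcal{X}=\cEN$ for {\FFTHga}, the gradient of $\FEo$ for {\FEM}) and the iterate as the conforming test field. The paper only spells out the {\FFTHga} case and invokes analogy for {\FEM}, whereas you make both correspondences explicit and correctly flag exact integration as the ingredient that makes the discrete--continuous identification work on the Fourier side; this is a faithful, slightly more detailed version of the same argument.
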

\begin{proof}Because of analogy, the proof is discussed only for FFTH-Ga. Let $\Ve$ and $\Ve\iter{i}$ be trigonometric polynomials determined by vectors $\MBe$ and $\MBe\iter{i}$ with respect to the trigonometric basis.
	Then, the discrete energetic norm is equal to the continuous one
	\begin{align*}
	\|\MBe-\MBe\iter{i}\|_{\MBA} = \bilf{\Ve-\Ve\iter{i}}{\Ve-\Ve\iter{i}}.
	\end{align*}
	Because of that, we can incorporate Lemma~\ref{lem:ener_norm} with $\mathcal{X}=\cEN$ and corresponding homogenised properties
	\begin{align*}
	A_{\eff,\VN,(i)} &= \bilf{\VE+\Ve\iter{i}}{\VE+\Ve\iter{i}},
	&
	A_{\eff,\VN} &= \bilf{\VE+\Ve}{\VE+\Ve}.
	\end{align*}

\end{proof}

\section{Approximate homogenised properties}
\label{sec:approximate-homogenised-properties}

Two possible ways for obtaining approximate, more reliable, homogenised properties are presented in the following sections.

\subsection{Mean of primal-dual values}\label{sec:mean-of-primal-dual-values}
According to \cite{Willis1989,Dvorak1993master}, more reliable approximate homogenised properties can be obtained from a mean values between the primal and the dual formulation
\begin{align*}
\B{A}_\eff \approx \frac{1}{2}\bigr(\B{A}_{\eff,\VN}-(\B{B}_{\eff,\VN})^{-1}\bigl),
\end{align*}
where $\mB_{\eff,\VN}$ is an approximation of the homogenised properties from the dual formulation, which is a lower bound on the homogenised properties $(\mB_{\eff,\VN})^{-1}\leq\Aeff$, see \cite{VoZeMa2014GarBounds} for details in the FFT-based setting.

\subsection{Non-linear least-squares estimation for different discretisation grids}\label{sec:non-linear-least-squares-estimation-for-different-discretisation-grids}

Another possibility for obtaining more accurate homogenised properties lies in estimation from the convergence rate using the solutions on several discretisation grids. From those convergence rate formulas discussed in Section~\ref{sec:discretisation-error}, we can derive the formula for the approximate homogenised properties $\tAeff$ using
\begin{align*}
\mathcal{A}_{\eff,N}(N,\tAeff, s, C) = C\cdot N^{-2s} + \tAeff,
\end{align*}
from which the convergence rate and homogenised properties can be estimated for a set of calculated data points $\bigl\{(N(i),\Aeffth(i))\bigr\}_{i=1}^m$ for different discretisation points.

Therefore, we define the non-linear least squares problem
\begin{align*}
(\tAeff, s, C) = \argmin_{\tAeff, s, C}\sum_{i=1}^m [\Aeffth(i)-\mathcal{A}_{\eff,N}(N(i),\tAeff, s, C)]^2.
\end{align*}
In our setting\footnote{This problem was solved with an open source numerical library \href{https://www.scipy.org/}{SciPy}, using
\href{http://docs.scipy.org/doc/scipy/reference/generated/scipy.optimize.curve_fit.html}{\texttt{scipy.optimize.curve\_fit}}}, it was enough to use four or five different meshes (discretisation grids).

\subsection{Approximate discretisation error}
Because the homogenised properties $\Aeff$ are unknown and can be only approximated, we estimate them with $\tAeff$, as described in \ref{sec:approximate-homogenised-properties}. It leads to \emph{approximate discretisation error in energetic norm}
\begin{subequations}
	\label{eq:ener_norm_app}
	\begin{align*}
	\|\Ve-\Ve_\VN\|^2_{\B{A}} &\approx \Aeffth - \tAeff,
	\\
	\| \Ve-\nabla u_{\VN,p} \|^2_{\B{A}} &\approx \Aeffem - \tAeff,
	\end{align*}
\end{subequations}
which are used for numerical comparison.
Particularly, the following values in Table~\ref{tab:Aeff} have been predicted using the homogenised properties on several grids, presented in \ref{sec:non-linear-least-squares-estimation-for-different-discretisation-grids}. Those values have been also controlled by the homogenised values obtained as averages of upper-lower bounds, presented in \ref{sec:mean-of-primal-dual-values}.

\begin{table}[h]
	\centering
	\begin{tabular}{c|c|c|c}
		dimension & problem & $\rho$ & $\tAeff$ \\
		\hline 2 & \squar & 10 & 3.0416470728
		\\
		2 & \pyramid & 10 & 3.6685617065
		\\
		3 & \squar & 10 & 2.9072530862
		\\
		3 & \pyramid & 10 & 2.9870480854
		\\
		2 & \squar & 100 & 3.6931324468
		\\
		2 & \pyramid & 100 & 14.482810295
		\\
		3 & \squar & 100 & 3.6418887304
		\\
		3 & \pyramid & 100 & 9.1891217513
		\\
		\hline
	\end{tabular}
	\caption{List of approximated homogenised properties $\tAeff$}
	\label{tab:Aeff}
\end{table}



\begin{thebibliography}{10}
 \expandafter\ifx\csname url\endcsname\relax
 \def\url#1{\texttt{#1}}\fi
 \expandafter\ifx\csname urlprefix\endcsname\relax\def\urlprefix{URL }\fi
 \expandafter\ifx\csname href\endcsname\relax
 \def\href#1#2{#2} \def\path#1{#1}\fi
 
 \bibitem{DeGeus2016stat}
 T.~W.~J. de~Geus, J.~E.~P. van Duuren, R.~H.~J. Peerlings, M.~G.~D. Geers,
 {Fracture initiation in multi-phase materials: A statistical characterization
  of microstructural damage sites}, Materials Science and Engineering: A 673
 (2016) 551----556.
 \newblock \href {http://dx.doi.org/10.1016/j.msea.2016.06.082}
 {\path{doi:10.1016/j.msea.2016.06.082}}.
 
 \bibitem{DeGeus2015a}
 T.~W.~J. de~Geus, R.~H.~J. Peerlings, M.~G.~D. Geers, {Microstructural modeling
  of ductile fracture initiation in multi-phase materials}, Engineering
 Fracture Mechanics 147 (2015) 318--330.
 \newblock \href {http://dx.doi.org/10.1016/j.engfracmech.2015.04.010}
 {\path{doi:10.1016/j.engfracmech.2015.04.010}}.
 
 \bibitem{DeGeus2019fric}
 T.~W.~J. de~Geus, M.~Popovi{\'{c}}, W.~Ji, A.~Rosso, M.~Wyart, {How collective
  asperity detachments nucleate slip at frictional interfaces}, Proceedings of
 the National Academy of Sciences (2019).
 \newblock \href {http://dx.doi.org/10.1073/pnas.1906551116}
 {\path{doi:10.1073/pnas.1906551116}}.
 
 \bibitem{NeKrVo2013twoscale}
 J.~N{\v{e}}me{\v{c}}ek, V.~Kr{\'{a}}l{\'{i}}k, J.~Vondřejc, {A two-scale
  micromechanical model for aluminium foam based on results from
  nanoindentation}, Computers \& Structures 128 (2013) 136--145.
 \newblock \href {http://dx.doi.org/10.1016/j.compstruc.2013.07.007}
 {\path{doi:10.1016/j.compstruc.2013.07.007}}.
 
 \bibitem{NeKrVo2013CCC}
 J.~N{\v{e}}me{\v{c}}ek, V.~Kr{\'{a}}l{\'{i}}k, J.~Vondřejc, {Micromechanical
  analysis of heterogeneous structural materials}, Cement and Concrete
 Composites 36~(1) (2013) 85--92.
 \newblock \href {http://dx.doi.org/10.1016/j.cemconcomp.2012.06.015}
 {\path{doi:10.1016/j.cemconcomp.2012.06.015}}.
 
 \bibitem{Boyd2000book}
 J.~P. Boyd, {Chebyshev and Fourier spectral methods}, 2nd Edition, Courier
 Corporation, New York, 2001.
 
 \bibitem{Canuto2006SM_book}
 C.~Canuto, M.~Y. Hussaini, A.~Quarteroni, Z.~A. Thomas, {Spectral Methods:
  Fundamentals in Single Domains}, Scientific Computation, Springer, Berlin,
 Heidelberg, 2006.
 \newblock \href {http://dx.doi.org/10.1007/978-3-540-30726-6}
 {\path{doi:10.1007/978-3-540-30726-6}}.
 
 \bibitem{SaVa2000PIaPDE}
 J.~Saranen, G.~Vainikko, {Periodic Integral and Pseudodifferential Equations
  with Numerical Approximation}, Springer Monographs Mathematics, Berlin,
 Heidelberg, 2002.
 
 \bibitem{Cain1984}
 A.~Cain, J.~Ferziger, W.~Reynolds, {Discrete orthogonal function expansions for
  non-uniform grids using the fast fourier transform}, Journal of Computational
 Physics 56~(2) (1984) 272--286.
 \newblock \href {http://dx.doi.org/10.1016/0021-9991(84)90096-2}
 {\path{doi:10.1016/0021-9991(84)90096-2}}.
 
 \bibitem{Cai1989}
 W.~Cai, D.~Gottlieb, C.-W. Shu, {Essentially nonoscillatory spectral Fourier
  methods for shock wave calculations}, Mathematics of Computation 52~(186)
 (1989) 389--389.
 \newblock \href {http://dx.doi.org/10.1090/S0025-5718-1989-0955749-2}
 {\path{doi:10.1090/S0025-5718-1989-0955749-2}}.
 
 \bibitem{Luciano1994}
 R.~Luciano, E.~Barbero, {Formulas for the stiffness of composites with periodic
  microstructure}, International Journal of Solids and Structures 31~(21)
 (1994) 2933--2944.
 \newblock \href {http://dx.doi.org/10.1016/0020-7683(94)90060-4}
 {\path{doi:10.1016/0020-7683(94)90060-4}}.
 
 \bibitem{VoZeMa2014FFTH}
 J.~Vondřejc, J.~Zeman, I.~Marek, {An FFT-based Galerkin method for
  homogenization of periodic media}, Computers \& Mathematics with Applications
 68~(3) (2014) 156--173.
 \newblock \href {http://dx.doi.org/10.1016/j.camwa.2014.05.014}
 {\path{doi:10.1016/j.camwa.2014.05.014}}.
 
 \bibitem{Moulinec1994FFT}
 H.~Moulinec, P.~Suquet, {A fast numerical method for computing the linear and
  nonlinear mechanical properties of composites}, Comptes rendus de
 l'Acad{\'{e}}mie des sciences. S{\'{e}}rie II, M{\'{e}}canique, physique,
 chimie, astronomie 318~(11) (1994) 1417--1423.
 
 \bibitem{ZeVoNoMa2010AFFTH}
 J.~Zeman, J.~Vondřejc, J.~Nov{\'{a}}k, I.~Marek, {Accelerating a FFT-based
  solver for numerical homogenization of periodic media by conjugate
  gradients}, Journal of Computational Physics 229~(21) (2010) 8065--8071.
 \newblock \href {http://dx.doi.org/10.1016/j.jcp.2010.07.010}
 {\path{doi:10.1016/j.jcp.2010.07.010}}.
 
 \bibitem{VoZeMa2012LNSC}
 J.~Vondřejc, J.~Zeman, I.~Marek, {Analysis of a Fast Fourier Transform Based
  Method for Modeling of Heterogeneous Materials}, in: I.~Lirkov, S.~Margenov,
 J.~Wa{\'{s}}niewski (Eds.), Large-Scale Scientific Computing, Vol. 7116 of
 Lecture Notes in Computer Science, Springer, Berlin, Heidelberg, 2012, pp.
 512--522.
 \newblock \href {http://dx.doi.org/10.1007/978-3-642-29843-1_58}
 {\path{doi:10.1007/978-3-642-29843-1_58}}.
 
 \bibitem{VoZeMa2014GarBounds}
 J.~Vondřejc, J.~Zeman, I.~Marek, {Guaranteed upper-lower bounds on homogenized
  properties by FFT-based Galerkin method}, Computer Methods in Applied
 Mechanics and Engineering 297 (2015) 258--291.
 \newblock \href {http://dx.doi.org/10.1016/j.cma.2015.09.003}
 {\path{doi:10.1016/j.cma.2015.09.003}}.
 
 \bibitem{Schneider2014convergence}
 M.~Schneider, {Convergence of FFT-based homogenization for strongly
  heterogeneous media}, Mathematical Methods in the Applied Sciences 38~(13)
 (2014) 2761--2778.
 \newblock \href {http://dx.doi.org/10.1002/mma.3259}
 {\path{doi:10.1002/mma.3259}}.
 
 \bibitem{MiVoZe2016jcp}
 N.~Mishra, J.~Vondřejc, J.~Zeman, {A comparative study on low-memory iterative
  solvers for FFT-based homogenization of periodic media}, Journal of
 Computational Physics 321 (2016) 151--168.
 \newblock \href {http://dx.doi.org/10.1016/j.jcp.2016.05.041}
 {\path{doi:10.1016/j.jcp.2016.05.041}}.
 
 \bibitem{ZeGeVoPeGe2017}
 J.~Zeman, T.~W.~J. de~Geus, J.~Vondřejc, R.~H.~J. Peerlings, M.~G.~D. Geers,
 {A finite element perspective on non-linear FFT-based micromechanical
  simulations}, International Journal for Numerical Methods in Engineering
 111~(10) (2017) 903--926.
 \newblock \href {http://dx.doi.org/10.1002/nme.5481}
 {\path{doi:10.1002/nme.5481}}.
 
 \bibitem{GeVoZePeGe2017large}
 T.~W.~J. de~Geus, J.~Vondřejc, J.~Zeman, R.~H.~J. Peerlings, M.~G.~D. Geers,
 {Finite strain FFT-based non-linear solvers made simple}, Computer Methods in
 Applied Mechanics and Engineering 318 (2017) 412--430.
 \newblock \href {http://dx.doi.org/10.1016/j.cma.2016.12.032}
 {\path{doi:10.1016/j.cma.2016.12.032}}.
 
 \bibitem{Vondrejc2015FFTimproved}
 J.~Vondřejc, {Improved guaranteed computable bounds on homogenized properties
  of periodic media by the Fourier–Galerkin method with exact integration},
 International Journal for Numerical Methods in Engineering 107~(13) (2016)
 1106--1135.
 \newblock \href {http://dx.doi.org/10.1002/nme.5199}
 {\path{doi:10.1002/nme.5199}}.
 
 \bibitem{Michel1999}
 J.-C. Michel, H.~Moulinec, P.~Suquet, {Effective properties of composite
  materials with periodic microstructure: a computational approach}, Computer
 Methods in Applied Mechanics and Engineering 172~(1--4) (1999) 109--143.
 \newblock \href {http://dx.doi.org/10.1016/S0045-7825(98)00227-8}
 {\path{doi:10.1016/S0045-7825(98)00227-8}}.
 
 \bibitem{Prakash2009}
 A.~Prakash, R.~A. Lebensohn, {Simulation of micromechanical behavior of
  polycrystals: finite elements versus fast Fourier transforms}, Modelling and
 Simulation in Materials Science and Engineering 17~(6) (2009) 064010.
 \newblock \href {http://dx.doi.org/10.1088/0965-0393/17/6/064010}
 {\path{doi:10.1088/0965-0393/17/6/064010}}.
 
 \bibitem{Liu2010comparison}
 B.~Liu, D.~Raabe, F.~Roters, P.~Eisenlohr, R.~A. Lebensohn, {Comparison of
  finite element and fast Fourier transform crystal plasticity solvers for
  texture prediction}, Modelling and Simulation in Materials Science and
 Engineering 18~(8) (2010) 85005.
 \newblock \href {http://dx.doi.org/10.1088/0965-0393/18/8/085005}
 {\path{doi:10.1088/0965-0393/18/8/085005}}.
 
 \bibitem{Dunant2013a}
 C.~F. Dunant, B.~Bary, A.~B. Giorla, C.~P{\'{e}}niguel, J.~Sanahuja,
 C.~Toulemonde, A.-B. Tran, F.~Willot, J.~Yvonnet, {A critical comparison of
  several numerical methods for computing effective properties of highly
  heterogeneous materials}, Advances in Engineering Software 58 (2013) 1--12.
 \newblock \href {http://dx.doi.org/10.1016/j.advengsoft.2012.12.002}
 {\path{doi:10.1016/j.advengsoft.2012.12.002}}.
 
 \bibitem{Robert2015}
 C.~Robert, C.~Mareau, {A comparison between different numerical methods for the
  modeling of polycrystalline materials with an elastic–viscoplastic
  behavior}, Computational Materials Science 103 (2015) 134--144.
 \newblock \href {http://dx.doi.org/10.1016/j.commatsci.2015.03.028}
 {\path{doi:10.1016/j.commatsci.2015.03.028}}.
 
 \bibitem{Leclerc2015}
 W.~Leclerc, N.~Ferguen, C.~P{\'{e}}legris, H.~Haddad, E.~Bellenger,
 M.~Guessasma, {A numerical investigation of effective thermoelastic
  properties of interconnected alumina/Al composites using FFT and FE
  approaches}, Mechanics of Materials 92 (2016) 42--57.
 \newblock \href {http://dx.doi.org/10.1016/j.mechmat.2015.09.002}
 {\path{doi:10.1016/j.mechmat.2015.09.002}}.
 
 \bibitem{Willot2015}
 F.~Willot, {Fourier-based schemes for computing the mechanical response of
  composites with accurate local fields}, Comptes Rendus M{\'{e}}canique 343
 (2015) 232--245.
 \newblock \href {http://dx.doi.org/10.1016/j.crme.2014.12.005}
 {\path{doi:10.1016/j.crme.2014.12.005}}.
 
 \bibitem{Schneider2015stag}
 M.~Schneider, F.~Ospald, M.~Kabel, {Computational homogenization of elasticity
  on a staggered grid}, International Journal for Numerical Methods in
 Engineering 105~(9) (2016) 693--720.
 \newblock \href {http://dx.doi.org/10.1002/nme.5008}
 {\path{doi:10.1002/nme.5008}}.
 
 \bibitem{Anglin2014}
 B.~S. Anglin, R.~A. Lebensohn, A.~D. Rollett, {Validation of a numerical method
  based on Fast Fourier Transforms for heterogeneous thermoelastic materials by
  comparison with analytical solutions}, Computational Materials Science 87
 (2014) 209--217.
 \newblock \href {http://dx.doi.org/10.1016/j.commatsci.2014.02.027}
 {\path{doi:10.1016/j.commatsci.2014.02.027}}.
 
 \bibitem{Strang1972varcrime}
 G.~Strang, {Variational crimes in the finite element method}, in: A.~K. Aziz
 (Ed.), The mathematical foundations of the finite element method with
 applications to partial differential equations, Academic Press, New York, NY,
 USA, 1972, pp. 689--710.
 
 \bibitem{Hou2009}
 Y.~Efendiev, T.~Y. Hou, {Multiscale Finite Element Methods}, Springer New York,
 New York, NY, 2009.
 \newblock \href {http://dx.doi.org/10.1007/978-0-387-09496-0}
 {\path{doi:10.1007/978-0-387-09496-0}}.
 
 \bibitem{Kanoute2009}
 P.~Kanout{\'{e}}, D.~P. Boso, J.~L. Chaboche, B.~A. Schrefler, {Multiscale
  Methods for Composites: A Review}, Archives of Computational Methods in
 Engineering 16~(1) (2009) 31--75.
 \newblock \href {http://dx.doi.org/10.1007/s11831-008-9028-8}
 {\path{doi:10.1007/s11831-008-9028-8}}.
 
 \bibitem{Hughes1996}
 T.~J. Hughes, J.~R. Stewart, {A space-time formulation for multiscale
  phenomena}, Journal of Computational and Applied Mathematics 74~(1-2) (1996)
 217--229.
 \newblock \href {http://dx.doi.org/10.1016/0377-0427(96)00025-8}
 {\path{doi:10.1016/0377-0427(96)00025-8}}.
 
 \bibitem{Efendiev2013}
 Y.~Efendiev, J.~Galvis, T.~Y. Hou, {Generalized multiscale finite element
  methods (GMsFEM)}, Journal of Computational Physics 251 (2013) 116--135.
 \newblock \href {http://dx.doi.org/10.1016/J.JCP.2013.04.045}
 {\path{doi:10.1016/J.JCP.2013.04.045}}.
 
 \bibitem{Brisard2010FFT}
 S.~Brisard, L.~Dormieux, {FFT-based methods for the mechanics of composites: A
  general variational framework}, Computational Materials Science 49~(3) (2010)
 663--671.
 \newblock \href {http://dx.doi.org/doi:10.1016/j.commatsci.2010.06.009}
 {\path{doi:doi:10.1016/j.commatsci.2010.06.009}}.
 
 \bibitem{Brisard2012FFT}
 S.~Brisard, L.~Dormieux, {Combining Galerkin approximation techniques with the
  principle of Hashin and Shtrikman to derive a new FFT-based numerical method
  for the homogenization of composites}, Computer Methods in Applied Mechanics
 and Engineering 217--220 (2012) 197--212.
 \newblock \href {http://dx.doi.org/10.1016/j.cma.2012.01.003}
 {\path{doi:10.1016/j.cma.2012.01.003}}.
 
 \bibitem{Brisard2016}
 S.~Brisard, {Reconstructing displacements from the solution to the periodic
  Lippmann-Schwinger equation discretized on a uniform grid}, International
 Journal for Numerical Methods in Engineering 109~(4) (2017) 459--486.
 \newblock \href {http://dx.doi.org/10.1002/nme.5263}
 {\path{doi:10.1002/nme.5263}}.
 
 \bibitem{Willot2013fourier}
 F.~Willot, B.~Abdallah, Y.-P. Pellegrini, {Fourier-based schemes with modified
  Green operator for computing the electrical response of heterogeneous media
  with accurate local fields}, International Journal for Numerical Methods in
 Engineering 98~(7) (2014) 518--533.
 \newblock \href {http://dx.doi.org/10.1002/nme.4641}
 {\path{doi:10.1002/nme.4641}}.
 
 \bibitem{Schneider2016hexa}
 M.~Schneider, D.~Merkert, M.~Kabel, {FFT-based homogenization for
  microstructures discretized by linear hexahedral elements}, International
 Journal for Numerical Methods in Engineering 109~(10) (2017) 1461--1489.
 \newblock \href {http://dx.doi.org/10.1002/nme.5336}
 {\path{doi:10.1002/nme.5336}}.
 
 \bibitem{Vondrejc2013PhD}
 J.~Vondřejc, {FFT-based method for homogenization of periodic media: Theory
  and applications}, Ph.D. thesis, Czech Technical University in Prague (2013).
 \newblock \href {http://dx.doi.org/10.13140/RG.2.1.2534.2489}
 {\path{doi:10.13140/RG.2.1.2534.2489}}.
 
 \bibitem{Kabel2015}
 M.~Kabel, D.~Merkert, M.~Schneider, {Use of composite voxels in FFT-based
  homogenization}, Computer Methods in Applied Mechanics and Engineering 294
 (2015) 168--188.
 \newblock \href {http://dx.doi.org/10.1016/j.cma.2015.06.003}
 {\path{doi:10.1016/j.cma.2015.06.003}}.
 
 \bibitem{Bensoussan1978book}
 A.~Bensoussan, J.-L. Lions, G.~Papanicolaou, {Asymptotic Analysis for Periodic
  Structures}, North Holland, Amsterdam, 1978.
 
 \bibitem{Willis1989}
 J.~R. Willis, {The Structure of Overall Constitutive Relations for a Class of
  Nonlinear Composites}, IMA Journal of Applied Mathematics 43~(3) (1989)
 231--242.
 \newblock \href {http://dx.doi.org/10.1093/imamat/43.3.231}
 {\path{doi:10.1093/imamat/43.3.231}}.
 
 \bibitem{Dvorak1993master}
 J.~Dvoř{\'{a}}k, {Optimization of Composite Materials}, Ph.D. thesis, Charles
 University in Prague (1993).
 
 \bibitem{Haslinger1995optimum}
 J.~Haslinger, J.~Dvoř{\'{a}}k, {Optimum composite material design},
 RAIRO-Mathematical Modelling and Numerical Analysis-Modelisation Mathematique
 et Analyse Numerique 29~(6) (1995) 657--686.
 
 \bibitem{Ciarlet1991HNAFEM}
 P.~G. Ciarlet, {Handbook of Numerical Analysis: Finite Element Methods},
 Elsevier Science Publishers B.V. (North-Holland), 1991.
 
 \bibitem{Fenics_v15}
 M.~S. Aln{\ae}s, J.~Blechta, J.~Hake, A.~Johansson, B.~Kehlet, A.~Logg,
 C.~Richardson, J.~Ring, M.~E. Rognes, G.~N. Wells, {The FEniCS Project
  Version 1.5}, Archive of Numerical Software 3~(100) (2015) 9--23.
 \newblock \href {http://dx.doi.org/10.11588/ANS.2015.100.20553}
 {\path{doi:10.11588/ANS.2015.100.20553}}.
 
 \bibitem{Ern2006}
 A.~Ern, J.-l. Guermond, {Evaluation of the condition number in linear systems
  rising in finite element approximations}, ESAIM: Mathematical Modelling and
 Numerical Analysis 40~(1) (2006) 29--48.
 \newblock \href {http://dx.doi.org/10.1051/m2an} {\path{doi:10.1051/m2an}}.
 
 \bibitem{Hlavacek2014flyash}
 P.~Hlav{\'{a}}{\v{c}}ek, V.~{\v{S}}milauer, F.~{\v{S}}kv{\'{a}}ra,
 L.~Kopeck{\'{y}}, R.~{\v{S}}ulc, {Inorganic foams made from alkali-activated
  fly ash: Mechanical, chemical and physical properties}, Journal of the
 European Ceramic Society 35~(2) (2015) 703--709.
 \newblock \href {http://dx.doi.org/10.1016/j.jeurceramsoc.2014.08.024}
 {\path{doi:10.1016/j.jeurceramsoc.2014.08.024}}.
 
 \bibitem{Vondrejc2017DoGIP-FEM}
 J.~Vondřejc, {Double-grid quadrature with interpolation-projection (DoGIP) as
  a novel discretisation approach: An application to FEM on simplexes},
 Computers \& Mathematics with Applications 78~(11) (2019) 3501--3513.
 \newblock \href {http://dx.doi.org/10.1016/j.camwa.2019.05.021}
 {\path{doi:10.1016/j.camwa.2019.05.021}}.
 
 \bibitem{Moulinec2014comparison}
 H.~Moulinec, F.~Silva, {Comparison of three accelerated FFT-based schemes for
  computing the mechanical response of composite materials}, International
 Journal for Numerical Methods in Engineering 97~(13) (2014) 960--985.
 \newblock \href {http://dx.doi.org/10.1002/nme.4614}
 {\path{doi:10.1002/nme.4614}}.
 
\end{thebibliography}

\end{document}